\newtheorem{theorem}{Theorem}[section]
\newtheorem{proposition}[theorem]{Proposition}
\newtheorem{lemma}[theorem]{Lemma}
\newtheorem{corollary}[theorem]{Corollary}
\newtheorem{example}{Example}
\def\F{\mathcal{F} } 
\def\Et{\widetilde{E} } 
\def\Eh{\hat{E} } 
\def\Eb{\overline{E} } 
\def\Ft{\widetilde{\mathcal{F}} } 
\def\Ut{\widetilde{U} }
\def\R{\mathbb{R} } 
\def\Z{\mathbb{Z} } 
\def\nbd{neighborhood } 
\def\nbds{neighborhoods } 
\def\R{\mathbb{R} } 
\def\iff{if and only if }
\title[Recurrence, pointwise almost periodicity and orbit closure relation]
{Recurrence, pointwise almost periodicity  and orbit closure relation for flows and foliations}
\author{Tomoo Yokoyama}
\date{\today}
\address{Department of Mathematics, Hokkaido University, 
Kita 10, Nishi 8, Kita-Ku, Sapporo, Hokkaido, 060-0810, Japan \\
}
\email{yokoyama@math.sci.hokudai.ac.jp}
\thanks{The author is partially supported 
by the JST CREST Program at Department of Mathematics,  
Hokkaido University.}
\begin{document}

\maketitle

\begin{abstract}
In this paper, 
we obtain a characterization of  the recurrence of 
a continuous vector field $w$ of a closed connected surface $M$ 
as follows.  
The following are equivalent: 
1) 
$w$ is pointwise recurrent. 
2)
$w$ is pointwise almost periodic.
3) 
$w$ is minimal or pointwise periodic.
Moreover, 
if $w$ is regular, then 
the following are equivalent: 
1) 
$w$ is pointwise recurrent. 
2)
$w$ is minimal or 
the orbit space $M/w$ is either $[0,1]$, or $S^1$. 
3)  
$R$ is closed   
(where 
$R := \{ (x,y) \in M \times M \mid y \in \overline{O(x)} \}$  
is the orbit closure relation).  
On the other hand, 
we show that 
the following are equivalent 
for a codimension one foliation $\mathcal{F}$ on a compact connected manifold: 
1) $\mathcal{F}$ is pointwise almost periodic. 
2) $\mathcal{F}$ is minimal or compact. 
3) $\mathcal{F}$ is $R$-closed. 
Also we show that if 
a foliated space on a compact metrizable space is either 
minimal or
both compact and without infinite holonomy, 
then it is $R$-closed.
\end{abstract}

\section{Preliminaries}
In \cite{AGW} and \cite{Ha}, 
it is showed that  
the following properties are equivalent 
for a finitely generated group $G$ on 
either a compact zero-dimensional space or  
 a graph $X$: 
1) $(G,X)$ is pointwise recurrent. 
2) $(G,X)$ is pointwise almost periodic. 
3) The orbit closure relation $R= \{ (x,y) \in X \times X \mid y \in \overline{G(x)} \}$ is closed.

In this paper, 
we study the equivalence for these three notions for 
 vector fields on surfaces  
 and 
 codimension one foliations on manifolds, 
 and show the some equivalence. 
%
We assume that 
every quotient space has the usual quotient topology and 
that every decomposition consists of 
non empty elements.  
By a decomposition, 
we mean a family $\F$ of pairwise disjoint subsets of a set $X$ 
such that $X = \sqcup \mathcal{F}$. 
Let $X$ be a 
topological space 
and 
$\mathcal{F}$ a decomposition of $X$. 
For any $x \in X$, 
denote by $L_x$ the element of $\mathcal{F}$ containing $x$. 
%
Write $E_{\F} := \{ (x, y) \mid y \in L_x \}$. 
Then $E_{\F}$ is an equivalence relation 
(i.e. a reflexive, symmetric and transitive relation). 
For a (binary) relation $E$ on a set $X$ 
(i.e. a subset of $X \times X$), 
let $E(x) := \{ y \in X \mid (x,  y) \in  E\}$ 
for an element $x$ of $X$.
For any $A \subseteq X$, 
let $E(A) := \cup_{y \in A}E(y)$.  
$A$ is said to be $E$-saturated if 
$A = \cup_{x \in A} E(x)$. 
Let $1_X := \{(x, x) \mid x \in X\}$ be the diagonal on $X \times X$. 
Thus $1_X \subseteq E$ if and only if 
$E$ is reflexive 
(i.e. $x \in E(x)$ for all $x \in X$).  
Let $E^{-1} := \{ (y, x) \mid (x, y) \in  E\}$ 
(i.e. the image of $E$ under the bijection $T: X \to X$ 
which interchanges coordinates). 
Clearly, 
$E$ is symmetric if and only if  
$E = E^{-1}$. 
For any relation $E$ on $X$, 
$E$ is transitive if and only if 
$E(E(x)) \subseteq E(x)$. 
%
For an equivalence relation $E$, 
the collection of equivalence classes 
$\{E(x) \mid x \in X\}$ is a decomposition of X, 
denoted by $\F_E$. 
Note that 
decompositions (consisting of  nonempty elements) can be corresponded to equivalence relations. 
Therefore we can identify decompositions with equivalence relations. 
For a relation $E$ on a topological space $X$, 
$E$ define the relation $\Eh$ on 
$X$ with $\Eh (x) = \overline{E(x)}$. 
Denote by $\Eb$ the closure of $E$ in $X \times X$. 
We call 
$E$ 
pointwise almost periodic if $\Eh$ 
is an equivalence relation,  
$R$-closed if 
$\Eh$ is closed (i.e. $\Eh = \Eb$), 
compact
if 
each element of $E$ is compact, 
and 
it is minimal if 
each element of $E$ is dense in $X$.  
By identification, 
we also said that 
$\F$ is $R$-closed if so is $E_{\F}$ and others are defined in similar ways.  
Notice that 
if $\mathcal{F}$ is either a foliation or the set of orbits of a flow, 
then $\Eh_{\mathcal{F}}$ is transitive. 
By a flow, we mean a continuous action of a topological group $G$ on $X$. 
We call that 
$\mathcal{F}$ is trivial if it consists of singletons or 
is minimal. 
We characterize the transitivity for $\Eh$. 

\begin{lemma}\label{lem:0085}
$\Eh$ is transitive if and only if 
$E( \Eh (x)) \subseteq \Eh (x)$.  
%
When $\Eh$ is an equivalence relation,  
this says that each $\Eh (x)$ is a union of $E$-equivalence classes.
\end{lemma}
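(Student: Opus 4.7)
\medskip
\noindent\textbf{Proof plan.} The plan is to unpack both sides of the stated equivalence directly from the definition $\Eh(x) = \overline{E(x)}$, using only the monotonicity of closure and the fact that a union of closures sits inside the closure of the union. Transitivity of $\Eh$ means $\Eh(\Eh(x)) \subseteq \Eh(x)$ for every $x$, so the task reduces to comparing this with $E(\Eh(x)) \subseteq \Eh(x)$.

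For the forward direction, I would assume $\Eh$ is transitive and observe the trivial inclusion $E(y) \subseteq \overline{E(y)} = \Eh(y)$ for each $y$. Taking the union over $y \in \Eh(x)$ gives
\[
E(\Eh(x)) \;=\; \bigcup_{y \in \Eh(x)} E(y) \;\subseteq\; \bigcup_{y \in \Eh(x)} \Eh(y) \;=\; \Eh(\Eh(x)) \;\subseteq\; \Eh(x),
\]
which is exactly the required condition.

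For the converse, assume $E(\Eh(x)) \subseteq \Eh(x)$. Take the closure of both sides; since $\Eh(x)$ is already closed (as the closure of $E(x)$), we get $\overline{E(\Eh(x))} \subseteq \Eh(x)$. The key observation is then that
\[
\Eh(\Eh(x)) \;=\; \bigcup_{y \in \Eh(x)} \overline{E(y)} \;\subseteq\; \overline{\bigcup_{y \in \Eh(x)} E(y)} \;=\; \overline{E(\Eh(x))} \;\subseteq\; \Eh(x),
\]
using that a union of closures is contained in the closure of the union. This gives transitivity of $\Eh$.

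The only mildly subtle step is the inclusion $\bigcup \overline{E(y)} \subseteq \overline{\bigcup E(y)}$ in the converse; otherwise the argument is a direct chase through the definitions. Finally, for the closing remark, once $\Eh$ is an equivalence relation (so in particular $E_{\Eh}$-saturation makes sense and, since $E \subseteq \Eh$, each $E$-class sits inside a unique $\Eh$-class), the condition $E(\Eh(x)) \subseteq \Eh(x)$ says precisely that whenever $y \in \Eh(x)$ the whole $E$-class $E(y)$ lies in $\Eh(x)$, so $\Eh(x) = \bigcup_{y \in \Eh(x)} E(y)$ is a union of $E$-equivalence classes, as claimed.
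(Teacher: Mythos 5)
Your proposal is correct and uses the same key observation as the paper: since $\Eh(x)=\overline{E(x)}$ is closed, $E(y)\subseteq\Eh(x)$ forces $\Eh(y)=\overline{E(y)}\subseteq\Eh(x)$, which is exactly the converse direction (the paper leaves the trivial forward direction, $E(\Eh(x))\subseteq\Eh(\Eh(x))\subseteq\Eh(x)$, implicit). Your bulk version via $\bigcup_y\overline{E(y)}\subseteq\overline{\bigcup_y E(y)}$ is just a repackaging of the same pointwise argument, so there is nothing essentially different here.
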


\begin{proof}
Since $\Eh (x)$ is closed, 
$E(y) \subseteq \Eh (x)$ implies $\Eh (y) = \overline{E (y)} \subseteq \Eh (x)$.
\end{proof}

Now we state a useful tool.

\begin{lemma}\label{lem:009}
If $E$ is an equivalence relation, 
then $\Eh$ is an equivalence relation 
\iff it is symmetric 
(i.e. symmetry implies transitivity).
\end{lemma}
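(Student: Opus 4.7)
The plan is to show the nontrivial direction: assuming $E$ is an equivalence relation and $\hat{E}$ is symmetric, deduce that $\hat{E}$ is an equivalence relation. Reflexivity is immediate since $x \in E(x) \subseteq \overline{E(x)} = \hat{E}(x)$, using that $E$ is reflexive. So the only real content is deriving transitivity from symmetry.

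To do this I would invoke the previous Lemma~\ref{lem:0085}, which reduces transitivity of $\hat{E}$ to the inclusion $E(\hat{E}(x)) \subseteq \hat{E}(x)$ for every $x \in X$. So fix $x \in X$, take $y \in \hat{E}(x)$ and $z \in E(y)$, and aim to show $z \in \hat{E}(x)$. The symmetry hypothesis gives $x \in \hat{E}(y)$. Because $E$ is an equivalence relation and $z \in E(y)$, the $E$-classes satisfy $E(y) = E(z)$, hence $\hat{E}(y) = \overline{E(y)} = \overline{E(z)} = \hat{E}(z)$; thus $x \in \hat{E}(z)$, and one more use of symmetry yields $z \in \hat{E}(x)$, completing the argument.

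There is essentially no obstacle here: the whole proof turns on the single observation that, when $E$ is an equivalence relation, the map $y \mapsto \hat{E}(y)$ is constant on each $E$-class, which lets one swap $y$ for any $z \in E(y)$ inside $\hat{E}$. The two appeals to symmetry of $\hat{E}$ are the only maneuvers beyond this, and Lemma~\ref{lem:0085} packages the rest. I expect the author's proof to be similarly short, likely just a few lines making the same chain of identifications.
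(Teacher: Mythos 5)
Your proposal is correct and follows essentially the same route as the paper: take $y \in \Eh(x)$, $z \in E(y)$, use symmetry to get $x \in \Eh(y)$, use $E(y)=E(z)$ (hence $\Eh(y)=\Eh(z)$) to transfer this to $x \in \Eh(z)$, apply symmetry again to obtain $z \in \Eh(x)$, and conclude transitivity via Lemma~\ref{lem:0085}. The only cosmetic difference is that you also record the (trivial) reflexivity of $\Eh$, which the paper leaves implicit.
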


\begin{proof}
Let $y \in \Eh (x)$ and $z \in E(y)$. 
By the symmetry assumption, 
we have $x \in \Eh (y)$. 
Since $E$ is an equivalence relation,  
$E(y) = E(z)$ and so
$\Eh (y) = \Eh (z)$. 
So $x \in \Eh (z)$ 
and, by symmetry again, 
$z \in \Eh (x)$ and so 
$E(y) \subseteq \Eh (x)$. 
Then $E(\Eh(x)) \subseteq \Eh (x)$. 
Hence $\Eh$ is transitive by Lemma \ref{lem:0085}.
\end{proof}

Notice that the twist map $T: X \to X$ 
is a homeomorphism 
and
so $E = E^{-1}$ implies 
$\Eb = \Eb^{-1}$. 
In particular, 
$\Eb$ is symmetric whenever
$E$ is an equivalence relation. 
In particular, 
the previous lemma implies the following statement. 

\begin{corollary}\label{cor:15}
Suppose that  
$E$ is an equivalence relation. 
If $\Eh$ is closed, 
then $\Eh$ is an equivalence relation.
 \end{corollary}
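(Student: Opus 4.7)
The plan is to reduce the corollary to Lemma \ref{lem:009} by showing that the hypothesis ``$\Eh$ is closed'' forces $\Eh$ to be symmetric; once symmetry is in hand, Lemma \ref{lem:009} (symmetry implies transitivity when the base relation $E$ is an equivalence relation) finishes the argument, modulo a trivial reflexivity check.

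First I would unpack the hypothesis. By assumption $\Eh = \Eb$, so it suffices to prove that $\Eb$ is symmetric. Here I invoke the remark immediately preceding the corollary: the twist map $T \colon X \times X \to X \times X$, $(x,y) \mapsto (y,x)$, is a homeomorphism, so $\Eb^{-1} = T(\Eb) = \overline{T(E)} = \overline{E^{-1}}$. Since $E$ is an equivalence relation we have $E = E^{-1}$, and therefore $\Eb^{-1} = \overline{E^{-1}} = \Eb$. Combined with $\Eh = \Eb$ this yields $\Eh = \Eh^{-1}$, i.e., $\Eh$ is symmetric.

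Second, I would observe that $\Eh$ is reflexive: since $E$ is an equivalence relation, $x \in E(x) \subseteq \overline{E(x)} = \Eh(x)$ for every $x \in X$. So $\Eh$ is a reflexive, symmetric relation, and $E$ is an equivalence relation contained in $\Eh$. Now Lemma \ref{lem:009} applies verbatim and gives transitivity of $\Eh$. Hence $\Eh$ is an equivalence relation.

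There is really no hard step here; the corollary is essentially a packaging of Lemma \ref{lem:009} together with the elementary fact that the closure of a symmetric relation is symmetric. The only point that deserves a moment's care is the identification $\Eb^{-1} = \overline{E^{-1}}$, which relies on the twist map being a homeomorphism of $X \times X$; this is the single substantive ingredient beyond the preceding lemma.
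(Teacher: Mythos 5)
Your proof is correct and follows the same route as the paper: the paper's justification is precisely the remark before the corollary (the twist homeomorphism gives $\Eb=\Eb^{-1}$ when $E=E^{-1}$, so $\Eh=\Eb$ is symmetric) combined with Lemma \ref{lem:009}. Your additional reflexivity check is harmless and implicit in the paper's argument.
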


This is interpreted as the following statement.    
 
\begin{corollary}\label{cor:16}
If ${\F}$ is an $R$-closed decomposition, 
then ${\F}$ is pointwise almost periodic. 
\end{corollary}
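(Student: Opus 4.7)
The plan is to observe that this corollary is just Corollary \ref{cor:15} restated in the language of decompositions, so the proof is a one-line application of definitions.

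First, I would recall that by the identification of decompositions with equivalence relations (given earlier in the preliminaries), the decomposition $\mathcal{F}$ corresponds to the equivalence relation $E_{\mathcal{F}} = \{(x,y) \mid y \in L_x\}$, and the various notions ($R$-closed, pointwise almost periodic) for $\mathcal{F}$ are defined to mean the corresponding notions for $E_{\mathcal{F}}$. In particular, $\mathcal{F}$ being $R$-closed means $\widehat{E}_{\mathcal{F}}$ is closed, and $\mathcal{F}$ being pointwise almost periodic means $\widehat{E}_{\mathcal{F}}$ is an equivalence relation.

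Then I would simply apply Corollary \ref{cor:15}: since $E_{\mathcal{F}}$ is an equivalence relation and, by the $R$-closedness hypothesis, $\widehat{E}_{\mathcal{F}}$ is closed, Corollary \ref{cor:15} yields that $\widehat{E}_{\mathcal{F}}$ is an equivalence relation. Unwinding the identification, this is precisely the statement that $\mathcal{F}$ is pointwise almost periodic.

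There is essentially no obstacle here beyond bookkeeping; the content of the statement lies entirely in Lemma \ref{lem:009} and Corollary \ref{cor:15}, and Corollary \ref{cor:16} is simply their transcription from the relation-theoretic vocabulary into the decomposition-theoretic vocabulary used for foliations and flows in the sequel.
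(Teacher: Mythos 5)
Your proposal is correct and follows exactly the paper's route: the paper presents Corollary \ref{cor:16} as the decomposition-language interpretation of Corollary \ref{cor:15}, obtained by identifying $\F$ with the equivalence relation $E_{\F}$ and unwinding the definitions of $R$-closed and pointwise almost periodic, which is precisely your argument.
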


The converse of this corollary is not true 
 (see Example \ref{ex:04}). 
A map $p: X \to Y$ is said to be perfect if 
it is continuous, closed, surjective and 
each fiber $p^{-1}(y)$ for any $y \in Y$ is compact. 
Recall that 
a net is a function from a directed set to a topological space.  

\begin{lemma}\label{lem:0084}
If a relation $E$ is closed 
and 
$X$ is $T_1$, 
then $E(x)$ is closed for any $x \in X$. 
Moreover if 
$E$ is an equivalence relation and 
$X$ is compact Hausdorff, 
then 
the quotient map $q:X \to X/E$ is perfect. 
\end{lemma}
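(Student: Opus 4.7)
The plan is to handle the two assertions separately and in the stated order. For the first, I would invoke the $T_1$ hypothesis to conclude that $\{x\}$ is closed, hence the slice $\{x\}\times X$ is closed in $X\times X$. Intersecting with the closed relation $E$ yields a closed subset of that slice, and the second-coordinate projection $\pi_2\colon \{x\}\times X \to X$ is a homeomorphism. Since $\pi_2\bigl(E\cap (\{x\}\times X)\bigr) = E(x)$, we conclude that $E(x)$ is closed in $X$.

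For the second assertion, I would verify each of the four properties defining a perfect map. Continuity and surjectivity of the quotient map $q$ are built into the construction of $X/E$. The fiber of $q$ over the class of $x$ is the equivalence class $E(x)$, which by the first part is a closed subset of the compact Hausdorff space $X$ and is therefore compact. The only substantive point left is that $q$ is a closed map.

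By the defining property of the quotient topology, $q(C)$ is closed in $X/E$ if and only if its preimage $q^{-1}(q(C))=E(C)$ is closed in $X$, so I need to show that the $E$-saturation of every closed set is closed. I would argue by nets: let $C\subseteq X$ be closed and take a net $(y_\alpha)$ in $E(C)$ converging to some $y\in X$; pick witnesses $x_\alpha \in C$ with $(x_\alpha, y_\alpha)\in E$. Compactness of $X$ allows us to pass to a subnet on which $x_\alpha\to x$. Since $C$ is closed, $x\in C$; since $E$ is closed in $X\times X$, the limit $(x,y)$ lies in $E$. Therefore $y\in E(x)\subseteq E(C)$, and $E(C)$ is closed.

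The main obstacle is the closedness of $q$, i.e.\ showing that saturations of closed sets under a closed equivalence relation on a compact Hausdorff space remain closed. This is the one place where both hypotheses (closedness of $E$ as a subset of $X\times X$ and compactness of $X$) are used in an essential way; the other three requirements of being a perfect map drop out formally once the first assertion and the closed-map property are in hand.
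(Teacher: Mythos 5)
Your proposal is correct and follows essentially the same route as the paper: the slice $E\cap(\{x\}\times X)$ argument for the first assertion, and a net--subnet argument using compactness and the closedness of $E$ in $X\times X$ to show saturations of closed sets are closed (the paper phrases this as a contradiction, you argue directly, but the content is identical). The remaining perfect-map properties are handled the same way in both.
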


\begin{proof}
Note that 
each singleton is closed in a $T_1$-space. 
Since $\{x \} \times E(x) = E \cap (\{x \} \times X)$, 
we have $E(x)$ is closed in $X$.  
Suppose that 
$E$ is an equivalence relation and 
$X$ is compact Hausdorff. 
We will show that 
$q$ is closed. 
Otherwise 
there is a closed subset $B$ of $X$ such that 
$E(B)$ is not closed. 
Fix any $y \in \overline{E(B)} - E(B)$. 
Let 
$(y_{\alpha})$ be a net in $B$ and 
$x_{\alpha} \in E(y_{\alpha})$ such that 
$y_{\alpha} \to y$. 
Then $(x_{\alpha}, y_{\alpha}) \in E$. 
Since $B$ is closed and so compact, 
we may assume that 
$(x_{\alpha})$ converges to some element $x \in B$, by taking a subnet of $(x_{\alpha})$. 
Since $y \notin  E(x) \subseteq E(B)$, 
we have $(x, y) \notin E$. 
Since $(x_{\alpha}, y_{\alpha}) \in E$ and 
they converge to $(x, y)$,   
we have $(x, y) \in {\Eb}$, 
which contradicts that 
$E$ is closed. 
Since 
each fiber of every element of $X/E$ is of the form $E(x)$ for some $x \in X$  
and so compact, 
we have that 
$q$ is perfect.  
\end{proof}

A pointwise almost periodic $\F$ is 
weakly almost periodic in the sense of Gottschalk \cite{G} 
if 
the saturation 
$\cup_{x\in A} \overline{L_x}$ 
of orbit closures for any closed subset $A$ of $X$ is 
closed. 
Note that 
a pointwise almost periodic decomposition 
$\F$ is weakly almost equivalent 
if and only if  
the quotient map $q: X \to X/\Eh_{\F} 
(= X/\hat{\F})
$ 
is closed. 
This implies the following interpretation.

\begin{lemma}\label{lemma1}
Suppose that  $X$ is  a compact Hausdorff space. 
If a decomposition $\F$ is $R$-closed, then 
$\F$ is weakly almost periodic. 
\end{lemma}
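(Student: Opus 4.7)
The plan is to chain together results already established in the excerpt: $R$-closedness makes $\Eh$ a closed equivalence relation on the compact Hausdorff space $X$, which forces the quotient map $q: X \to X/\hat{\F}$ to be perfect (hence closed); and closedness of $q$ is precisely the characterization of weak almost periodicity noted in the paragraph immediately preceding the lemma.

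First, since $\F$ is $R$-closed, the relation $\Eh$ equals $\Eb$ and is therefore closed in $X \times X$. By Corollary \ref{cor:15}, $\Eh$ is then an equivalence relation, so $\F$ is pointwise almost periodic; this puts us in the regime where the criterion ``$q$ is closed iff $\F$ is weakly almost periodic'' applies.

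Second, I would invoke Lemma \ref{lem:0084} for the closed equivalence relation $\Eh$ on the compact Hausdorff space $X$ to conclude that $q: X \to X/\Eh$ is perfect, and in particular closed. The saturation statement then follows by a direct translation: for any closed $A \subseteq X$, the image $q(A)$ is closed in $X/\Eh$, so
\[
q^{-1}(q(A)) = \Eh(A) = \bigcup_{x \in A} \Eh(x) = \bigcup_{x \in A} \overline{L_x}
\]
is closed in $X$, which is precisely the saturation condition in the definition of weakly almost periodic. The identification $\Eh(x) = \overline{L_x}$ is immediate from the definitions since $L_x = E_{\F}(x)$.

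There is no real obstacle here — the argument is a short assembly of facts proved earlier in the section. The only point that needs care is the bookkeeping around the equivalence between the two formulations of weak almost periodicity (closedness of every saturation of a closed set versus closedness of the quotient map), which is stated as the remark immediately preceding the lemma and which lets us reduce the whole claim to closedness of $q$.
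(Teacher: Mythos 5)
Your proposal is correct and matches the paper's intended argument: the paper likewise reduces the claim to closedness of the quotient map via the remark preceding the lemma, using Corollary \ref{cor:15} to get pointwise almost periodicity and Lemma \ref{lem:0084} applied to the closed equivalence relation $\Eh_{\F}$ on the compact Hausdorff space $X$ to conclude that $q$ is perfect, hence closed.
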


\section{General cases for Flows and Foliated spaces}

For an equivalence relation $E$ on $X$ 
and for an element $x$ of $X$,  
recall that 
the class $\Et (x)$ of $x$ 
is defined 
by $\Et (x) := \{ y \in X \mid \Eh(y) = \Eh(x) \}$ \cite{HS}. 
These classes imply an equivalence relation on $E$. 
Then the quotient space by this equivalence relation is denoted by $X/\widetilde{E}$ 
and is called the orbit class space {\cite{BHSV} (or the quasi-orbits space)}. 
Write $\widetilde{\F} := \F_{\Et}$. 
By identification, 
$\widetilde{L} \in \Ft$ if and only if 
there is $L \in \F$ such that 
$\widetilde{L} = \{ y \in X \mid \overline{L} = \overline{L_y} \}$. 
%
%
In the case that 
$\mathcal{F}$ is pointwise almost periodic, 
$X/{\widetilde{\F}}$ is exactly  
the quotient space of  $\F_{\Eh}$ with 
the quotient topology and 
$X/\widetilde{\F} = X/\Et_{\F} = X/\Eh_{\F}$. 
If $\Eh$ is an equivalence relation, 
then $\Et = \Eh$ and so 
$X/\Et = X/\Eh$. 
Recall the following fact. 

\begin{lemma}\label{lem:0031}
(\cite{Bo} Proposition 8.3.8, 8.6.14)
Let $E$ be an equivalence relation in  $X$.  
If $X/E$ is Hausdorff, then $E$ is closed. 
If $X$ is $T_3$,  
then the converse holds.  
\end{lemma}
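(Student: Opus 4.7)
The plan is to prove the two directions separately; the forward implication is a soft argument about continuity, while the converse is the substantive half and uses regularity to upgrade a separation of points into a separation of equivalence classes.

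For the forward direction, I would exploit the universal property of the quotient topology. Since the quotient map $q : X \to X/E$ is continuous, the product map $q \times q : X \times X \to X/E \times X/E$ is continuous as well, and by construction $E = (q \times q)^{-1}(1_{X/E})$, where $1_{X/E}$ denotes the diagonal of $X/E$. A Hausdorff space has closed diagonal, so $1_{X/E}$ is closed, and hence its preimage $E$ is closed in $X \times X$. Notice that this direction uses no separation hypothesis on $X$ whatsoever.

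For the converse I would assume $X$ is $T_3$ and $E$ is closed, and show that distinct classes in $X/E$ can be separated by disjoint open neighborhoods. Take $[x] \neq [y]$, so $(x,y) \notin E$. Because $E$ is closed, pick a basic open box $U_0 \times V_0$ around $(x,y)$ with $(U_0 \times V_0) \cap E = \emptyset$. By Lemma \ref{lem:0084} (whose $T_1$ hypothesis is implied by $T_3$) the class $E(x)$ is closed, and similarly for $E(y)$, and neither point lies in the other's class. The key step is then to promote these point-level separations to saturated separations: using that $X$ is regular, around each point of $E(x)$ one obtains a closed neighborhood disjoint from a corresponding neighborhood of the matching point in $V_0$, and an analogous construction works for points of $E(y)$. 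Taking the union of the resulting $E$-saturated open neighborhoods yields disjoint saturated open sets containing $E(x)$ and $E(y)$, which descend via $q$ to the required disjoint open neighborhoods of $[x]$ and $[y]$ in $X/E$.

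The main obstacle is exactly this second step in the converse: closedness of $E$ together with regularity at single points $(x,y) \notin E$ produces a disjoint pair of open neighborhoods of $x$ and $y$, but Hausdorffness of $X/E$ requires separation of the entire classes $E(x)$ and $E(y)$ by \emph{saturated} opens. Handling this carefully (and observing that the construction is independent of the chosen representatives) is the heart of the argument; the remaining verifications—that the resulting sets are open, saturated, and project to disjoint open neighborhoods in $X/E$—are routine. Since this is the classical Bourbaki statement, I would simply cite Propositions 8.3.8 and 8.6.14 of \cite{Bo} rather than reproduce the full construction.
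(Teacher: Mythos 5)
Your forward implication is complete and correct: $E=(q\times q)^{-1}(1_{X/E})$ with $q\times q$ continuous and the diagonal of a Hausdorff space closed, with no separation hypothesis on $X$. Since the paper itself gives no argument for this lemma but simply cites Bourbaki, ending with the citation is consistent with what the paper does; the issue is only with the sketch you interpose for the converse.

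That sketch has a genuine gap, and it is exactly the step you flag as "the heart". Closedness of $E$ gives, for $(x,y)\notin E$, opens $U\ni x$, $V\ni y$ with $(U\times V)\cap E=\emptyset$, and by transitivity the saturations $E(U)$, $E(V)$ are then disjoint saturated sets containing the whole classes $E(x)$, $E(y)$ --- so disjoint \emph{saturated} separation is free. What is not free is openness: the saturation of an open set is open only when the quotient map is open, and your proposed fix --- regularity applied pointwise along $E(x)$ and $E(y)$, followed by "the union of the resulting $E$-saturated open neighborhoods" --- does not produce saturated open sets (the closed neighborhoods regularity gives are not saturated, and saturating them destroys openness), nor does it guarantee the two unions are disjoint without some uniform control such as compactness of the classes, a closed quotient map, or normality. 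Example \ref{ex:0021} in the paper shows how this can fail when only Hausdorffness is available, so regularity must enter structurally, not by pointwise shrinking. Be aware also that the classical Bourbaki proposition on this topic proves the converse under the hypothesis that the canonical map $q$ is \emph{open}, while the applications in this paper take place on compact Hausdorff spaces, where Lemma \ref{lem:0084} makes $q$ perfect and Hausdorffness of the quotient then follows; so if you rest the converse on the citation you should verify that the cited statements really cover the $T_3$ form asserted here, and if you want a self-contained proof you must supply one of these extra mechanisms --- the construction as you describe it would not go through.
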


Recall that  
a topological space is $T_3$ if 
it is Hausdorff and regular. 
This fact implies 
%
%
%
a useful tool. 

\begin{lemma}\label{lem:003}
Let $\mathcal{F}$ be 
a pointwise almost periodic decomposition 
on a $T_3$ space $X$. 
Then 
$\mathcal{F}$ is  $R$-closed  
if and only if 
$X/\widetilde{\F}$ is Hausdorff. 
\end{lemma}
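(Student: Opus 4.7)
The plan is to reduce Lemma \ref{lem:003} to a direct application of Lemma \ref{lem:0031}. The hypothesis that $\mathcal{F}$ is pointwise almost periodic means, by definition, that $\widehat{E}_{\mathcal{F}}$ is an equivalence relation. As the excerpt already remarks, in this situation the identifications
\[
X/\widetilde{\mathcal{F}} \;=\; X/\widetilde{E}_{\mathcal{F}} \;=\; X/\widehat{E}_{\mathcal{F}}
\]
hold (with the quotient topology). By the definition of $R$-closedness, $\mathcal{F}$ is $R$-closed exactly when $\widehat{E}_{\mathcal{F}} = \overline{E_{\mathcal{F}}}$, that is, exactly when the equivalence relation $\widehat{E}_{\mathcal{F}}$ is closed in $X\times X$. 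So the statement to be proved reduces to:
\[
\widehat{E}_{\mathcal{F}} \text{ is closed in } X\times X \;\Longleftrightarrow\; X/\widehat{E}_{\mathcal{F}} \text{ is Hausdorff}.
\]

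First I would handle the easier direction ($\Leftarrow$): assuming $X/\widehat{E}_{\mathcal{F}}$ is Hausdorff, the first half of Lemma \ref{lem:0031}, applied to the equivalence relation $E = \widehat{E}_{\mathcal{F}}$, yields that $\widehat{E}_{\mathcal{F}}$ is closed, which is exactly $R$-closedness of $\mathcal{F}$. No hypothesis on $X$ is needed here beyond what Lemma \ref{lem:0031} requires.

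Conversely, suppose $\mathcal{F}$ is $R$-closed, so $\widehat{E}_{\mathcal{F}}$ is a closed equivalence relation on $X$. Here I would invoke the $T_3$ assumption: by the second half of Lemma \ref{lem:0031}, a closed equivalence relation on a $T_3$ space has Hausdorff quotient, so $X/\widehat{E}_{\mathcal{F}} = X/\widetilde{\mathcal{F}}$ is Hausdorff.

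There is no serious obstacle; the only thing one should be careful about is the identification of the quotient spaces $X/\widetilde{\mathcal{F}}$ and $X/\widehat{E}_{\mathcal{F}}$ as topological spaces, which is precisely the point of requiring $\mathcal{F}$ to be pointwise almost periodic (otherwise $\widetilde{E}$ and $\widehat{E}$ need not coincide, and the quotient by $\widetilde{E}$ would not obviously be the quotient by an equivalence relation to which Lemma \ref{lem:0031} applies). Once that identification is in hand, the proof is a one-line application of Lemma \ref{lem:0031} in each direction.
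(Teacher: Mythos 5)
Your proposal is correct and is essentially the paper's own argument: identify $X/\widetilde{\F}$ with $X/\Eh_{\F}$ using pointwise almost periodicity, then apply the first part of Lemma \ref{lem:0031} for the direction ``Hausdorff quotient $\Rightarrow$ $\Eh_{\F}$ closed'' and the $T_3$ converse for the other direction. No substantive difference from the paper's proof.
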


\begin{proof}
Since $\mathcal{F}$ is pointwise almost periodic, 
we have that 
$\Eh_{\F}$ is an equivalence relation. 
Suppose that 
$\mathcal{F}$ is  $R$-closed. 
Then $\Eh_{\F}$ is closed. 
By Lemma \ref{lem:0031}, 
$X/\Eh_{\F} = X/\widetilde{\F}$ is Hausdorff. 
Conversely, 
suppose that 
$X/\widetilde{\F}$ is Hausdorff. 
Since $X/\Eh_{\F} = X/\widetilde{\F}$, 
Lemma \ref{lem:0031} implies that 
$\Eh_{\F}$ is closed. 
This shows that $\F$ is $R$-closed. 
\end{proof}

In this lemma, 
the 
regularity of $X$ 
is necessary 
even if $X$ is Hausdorff (see Example \ref{ex:0021}). 
However we don't know whether 
the regularity 
is 
necessary when $\F$ is 
the orbit space of a flow.  
Now we state an observation.

\begin{lemma}\label{lem:011}
Let $\mathcal{F}$ be a decomposition on $X$. 
If 
either 
$\mathcal{F}$ is minimal
or
$X/\mathcal{F}$ is Hausdorff, 
then 
$\mathcal{F}$ is  $R$-closed. 
\end{lemma}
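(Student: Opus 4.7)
The plan is to split the argument according to the two alternative hypotheses, neither of which requires any subtle analysis.

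For the minimal case, the key observation is just unpacking the definition: minimality means $\overline{L_x} = X$ for every $x \in X$, so $\Eh(x) = X$ for every $x$, and therefore $\Eh = X \times X$. This set is trivially closed in $X \times X$, so $\mathcal{F}$ is $R$-closed.

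For the case when $X/\mathcal{F}$ is Hausdorff, I would first reduce $\Eh$ to $E_{\mathcal{F}}$. Since singletons in a Hausdorff space are closed and the quotient map $q: X \to X/\mathcal{F}$ is continuous, each class $L_x = q^{-1}(q(x))$ is closed in $X$. Consequently $\overline{L_x} = L_x$ for all $x$, which gives $\Eh(x) = E_{\mathcal{F}}(x)$ pointwise, hence $\Eh = E_{\mathcal{F}}$. Applying the forward direction of Lemma \ref{lem:0031} (which does not require regularity of $X$), the Hausdorffness of $X/E_{\mathcal{F}}$ implies that $E_{\mathcal{F}}$ is closed in $X \times X$. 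Therefore $\Eh$ is closed and $\mathcal{F}$ is $R$-closed.

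I do not expect any genuine obstacle: the whole statement is a packaging of two short observations. The only point that requires any care is noticing that Hausdorffness of the quotient automatically forces the fibers of $q$ to be closed, and this is precisely what permits the identification $\Eh = E_{\mathcal{F}}$ that makes Lemma \ref{lem:0031} directly applicable.
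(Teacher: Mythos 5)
Your proof is correct. The minimal case is identical in substance to the paper's (there $\Eh=X\times X$, which the paper phrases as the decomposition into orbit closures being the single class $X$). For the Hausdorff-quotient case the paper argues differently: it first notes that all classes are closed, hence $\mathcal F$ is pointwise almost periodic, and then invokes Lemma \ref{lem:003} together with the identification $X/\mathcal F = X/\widetilde{\mathcal F}$. That route is slightly awkward, because Lemma \ref{lem:003} is stated for $T_3$ spaces while Lemma \ref{lem:011} makes no such assumption; the paper's argument only works because the direction of Lemma \ref{lem:003} actually used rests on the half of Lemma \ref{lem:0031} (Hausdorff quotient $\Rightarrow$ closed relation, via $E=(q\times q)^{-1}$ of the diagonal) that needs no regularity. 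You bypass this entirely: you observe $L_x=q^{-1}(q(x))$ is closed, so $\Eh=E_{\mathcal F}$, and then apply the regularity-free direction of Lemma \ref{lem:0031} directly. Your version is therefore cleaner and honestly valid for an arbitrary topological space $X$, which is exactly the generality the lemma claims; the paper's version buys nothing extra here beyond reusing Lemma \ref{lem:003} as a packaged tool.
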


\begin{proof}
If $\mathcal{F}$ is minimal, 
then $R =\Eh_{\mathcal{F}}$ is a singleton and so 
closed. 
If $X/\mathcal{F}$ is Hausdorff, 
then all orbits are closed and so 
$\mathcal{F}$ is pointwise almost periodic. 
By Lemma \ref{lem:003}, 
if suffices to show that 
$X/\widetilde{\F}$ is Hausdorff.  
But 
$X/\mathcal{F} = X/\widetilde{\F}$ is Hausdorff.
\end{proof}

This observation implies the following statement.

\begin{proposition}\label{cor:01}
A foliated space $(X, \F)$
on a compact metrizable space 
either 
which is 
minimal or which is compact and without infinite holonomy,  
is $R$-closed. 
\end{proposition}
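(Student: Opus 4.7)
The strategy is to reduce both cases to Lemma~\ref{lem:011}. Since $X$ is compact metrizable it is $T_4$, hence certainly $T_3$, so the machinery developed above is in force.

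The minimal case is immediate: if $\F$ is minimal, the first clause of Lemma~\ref{lem:011} directly yields that $\F$ is $R$-closed, with no further work required.

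For the compact, no-infinite-holonomy case the plan is to show that the leaf space $X/\F$ is Hausdorff; the second clause of Lemma~\ref{lem:011} then finishes the argument. The key ingredient I would invoke is a generalized Reeb local stability statement for foliated spaces: every compact leaf $L$ with finite holonomy admits arbitrarily small saturated open neighborhoods $U_L$ such that every leaf of $U_L$ is compact and is a finite-sheeted covering of $L$. Granting this, for two distinct classes $[L_1] \neq [L_2]$ in $X/\F$ I would pick such saturated neighborhoods $U_{L_1}, U_{L_2}$ and shrink them, using the local product structure of the foliated charts together with compactness of $L_1 \cup L_2$, until they become disjoint; their images in $X/\F$ are then disjoint open separating neighborhoods of $[L_1]$ and $[L_2]$.

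The main obstacle I foresee is justifying the stability statement in the foliated-space setting rather than the manifold setting. In a foliated space the transversals are only required to be compact metrizable, so the classical Reeb argument must be recast through the holonomy pseudogroup acting on a local transversal at $L$: by finiteness of the holonomy, the germ is a finite group of homeomorphisms of a small transversal, and a standard invariant-neighborhood construction (intersecting a small transversal neighborhood with its images under the finite group) produces the saturated neighborhood one wants. Once this stability statement is established, the Hausdorff separation argument is routine, and Lemma~\ref{lem:011} closes out the proof.
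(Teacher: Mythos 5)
Your skeleton coincides with the paper's: both cases are funneled through Lemma~\ref{lem:011}, the minimal case immediately, and the compact case by showing the leaf space $X/\F$ is Hausdorff (which is legitimate here since a compact metrizable space is $T_3$, as Lemma~\ref{lem:003}, used inside Lemma~\ref{lem:011}, requires). The only real difference is how Hausdorffness is obtained: the paper simply cites Theorem 4.2 of Epstein \cite{Ep}, which for a compact foliated space with all leaves compact and no infinite holonomy gives exactly that $X/\F$ is Hausdorff, whereas you propose to re-derive it from a generalized Reeb local stability theorem (arbitrarily small saturated open neighborhoods of each leaf) followed by a routine separation argument; granted the stability statement, your separation step is indeed fine, since disjoint saturated open neighborhoods have disjoint open images in the quotient. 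The caveat is that the stability theorem you need is essentially the content of the result the paper cites, and your one-line justification of it is the thin spot: finiteness of holonomy is a statement about \emph{germs}, and chosen representatives need not compose, or form a group, on any common transversal domain, so the ``intersect a small transversal neighborhood with its images under the finite group'' construction does not work verbatim. The standard proof covers the compact leaf by finitely many foliated charts and controls holonomy along plaque chains to produce the small saturated neighborhood and to show nearby leaves are compact finite covers; Epstein carries this out in the foliated-space generality needed here. So either reproduce that argument in full or cite it as the paper does; with that ingredient supplied, your proof is correct and is, at bottom, an unpacking of the paper's citation rather than a different method.
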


\begin{proof}
If 
$\F$  
is minimal, 
then Lemma \ref{lem:011} implies that 
$\F$ is $R$-closed. 
If $\F$ is a compact foliated space without infinite holonomy, 
then Theorem 4.2 \cite{Ep} implies that 
$X/\F$ is Hausdorff. 
By Lemma \ref{lem:011}, 
we have that 
$\F$ is $R$-closed. 
\end{proof}

Note that 
Epstein \cite{Ep2} et al have shown that 
each compact codimension two foliation on a compact manifold
has finite holonomy. 
This implies that 
the set of $R$-closed codimension two foliations 
contains properly the set of codimension two 
foliations which are minimal or compact. 
Therefore 
the author is interested in 
the characterization of  the $R$-closedness for 
codimension two foliations. 
Our statement is similar to the statement \cite{V}(A.2)3. 
However 
notice that 
it is not true, 
because pointwise almost periodicity does not 
correspond to $R$-closedness (cf. Example \ref{ex:04}).

\section{On the $R$-closedness of $\mathcal{F}$}
In this section, 
we will show that the following three notions are equivalent  
for an equivalence relation $E$ on 
a compact Hausdorff space
: 
%
1) $R$-closed, 
2) $D$-stable, and 
3) $L$-stable. 
%
%
%
%
A point $x$ in $X$ 
is said to be $D$-stable (or of characteristic 0) 
if $\Eh (x) =D(x)$ for any $x \in X$, 
where $D(x)$ is its (bilateral) prolongation defined as follows: 
$D(x) = 
\{ y \in X \mid y_{\alpha} \in  E(x_{\alpha}), y_{\alpha} \to y,  \text{ and } x_{\alpha} \to x 
\text{ for some nets } (y_{\alpha}),  (x_{\alpha}) \subseteq X \}$. 
An  
equivalence relation $E$
is said to be $D$-stable (or of characteristic 0) 
if each point is $D$-stable (i.e. $D = \Eh$). 
Note that 
some authors require also non-triviality for the 
definition of $D$-stability. 
%
Recall a well-known fact that 
an equivalence relation 
$E$ on a Hausdorff space is $D$-stable 
if and only if 
$\Eh$ is closed. 
This fact implies the following corollary. 
%
%
%
%
%

 \begin{corollary}\label{cor:00}
Suppose that 
$X$ is Hausdorff 
and 
that  
 $\mathcal{F}$ is a decomposition on $X$. 
Then 
$\mathcal{F}$ is $D$-stable 
if and only if 
$\mathcal{F}$ is 
$R$-closed. 
\end{corollary}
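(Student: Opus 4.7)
The plan is to reduce the statement directly to the well-known fact recalled just above, using the paper's identification of decompositions with equivalence relations. Writing $E := E_{\mathcal{F}}$ for the equivalence relation associated with $\mathcal{F}$, the condition that $\mathcal{F}$ is $D$-stable unpacks as $D(x) = \Eh(x)$ for every $x \in X$, while the condition that $\mathcal{F}$ is $R$-closed unpacks as $\Eh = \Eb$, i.e.\ $\Eh$ being closed in $X \times X$. Once I cite ``on a Hausdorff space, $E$ is $D$-stable if and only if $\Eh$ is closed,'' both directions of the corollary follow immediately.

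If I were to reprove the cited fact in outline rather than merely quoting it, the key observation is that one always has $\Eh(x) \subseteq D(x)$ by pairing a convergent net in $E(x)$ with the constant net at $x$, so the content lies in the reverse inclusion. For $\Eh$ closed $\Rightarrow$ $D = \Eh$: given $y \in D(x)$ witnessed by nets $(x_\alpha),(y_\alpha)$ with $(x_\alpha,y_\alpha)\in E$, the pairs $(x_\alpha,y_\alpha)$ lie in $\Eh$ and converge to $(x,y)$, so $(x,y)\in \Eh$ by closedness. For the converse: if $D = \Eh$ and $(x,y) \in \Eb$ is approximated by $(x_\alpha,y_\alpha) \in E$, then by definition $y \in D(x) = \Eh(x)$, so $(x,y) \in \Eh$, showing $\Eh$ is closed. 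Hausdorffness enters to ensure that the limits $x$ and $y$ of the respective nets are unique so that the definition of $D(x)$ is correctly witnessed.

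I do not anticipate any genuine obstacle: the corollary is essentially a translation of the cited fact into the language of decompositions via the correspondence $\mathcal{F}\leftrightarrow E_{\mathcal{F}}$. The only point requiring care is to verify that the pointwise definition of $D$-stability adopted here matches the one assumed in the cited fact, and that the parenthetical nontriviality convention flagged in the preceding paragraph is not being invoked.
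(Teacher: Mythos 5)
Your proposal is correct and takes the same route as the paper, which likewise deduces the corollary immediately from the quoted fact that on a Hausdorff space an equivalence relation $E$ is $D$-stable precisely when $\Eh$ is closed, transferred to decompositions via $\mathcal{F}\leftrightarrow E_{\mathcal{F}}$; your sketch of that fact (essentially observing $D=\Eb$ and $\Eh$ closed iff $\Eh=\Eb$) is sound. The only small quibble is your closing remark on where Hausdorffness enters: the net characterization of closure does not need unique limits, so that comment is inessential, but it does not affect the argument.
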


Now 
we consider 
the following Lyapunov stable type condition. 
%
We call that 
$E$ is $L$-stable 
if for any open neighborhood  $U $ of $\Eh (x)$ 
and for an element $x$ of $X$, 
there is a $E$-saturated open neighborhood  $V$ of $\Eh (x)$ 
contained in $U$. 
Note that 
this notion is similar to upper semicontinuity.

\begin{lemma}\label{lemma:001}
Assume that $E$ is an $L$-stable equivalence relation on $X$. 
If $X$ is $T_1$,  
then $\Eh$ is an equivalence relation. 
If $X$ is 
compact Hausdorff, 
then $\Eh$ is a closed equivalence relation. 
\end{lemma}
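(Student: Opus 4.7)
The plan is to treat the two assertions in sequence, reducing each to a single application of $L$-stability. For the first, the key observation is that by Lemma \ref{lem:009} it suffices to prove that $\Eh$ is symmetric; symmetry will then be extracted from $L$-stability together with $T_1$. For the second, I will upgrade to closedness of $\Eh \subseteq X \times X$ using normality of a compact Hausdorff space to separate a closed class closure from an external point.

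First I would establish symmetry by contradiction. Suppose $y \in \Eh(x)$ but $x \notin \Eh(y)$. Since $X$ is $T_1$, the complement $X \setminus \{x\}$ is an open neighborhood of the (automatically closed) set $\Eh(y)$, so $L$-stability supplies an $E$-saturated open set $V$ with $\Eh(y) \subseteq V \subseteq X \setminus \{x\}$. On the other hand, $y \in \Eh(x) = \overline{E(x)}$ yields a net $(y_\alpha)$ in $E(x)$ with $y_\alpha \to y \in V$, so eventually $y_\alpha \in V$. Because $V$ is $E$-saturated and each such $y_\alpha$ lies in the class $E(x)$, we get $E(x) \subseteq V$, and in particular $x \in V$, contradicting the choice of $V$. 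Lemma \ref{lem:009} then yields that $\Eh$ is an equivalence relation.

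For the compact Hausdorff case, the previous argument already gives that $\Eh$ is an equivalence relation, so it remains to show $\Eh$ is closed in $X \times X$. Take $(x, y) \notin \Eh$, so $y \notin \Eh(x)$. Using that a compact Hausdorff space is normal, I would separate the disjoint closed sets $\Eh(x)$ and $\{y\}$ by open sets $U \supseteq \Eh(x)$ and $W \ni y$ with $\overline{U} \cap W = \emptyset$. Applying $L$-stability to $U$ produces an $E$-saturated open set $V$ with $\Eh(x) \subseteq V \subseteq U$. Then $V \times W$ is an open neighborhood of $(x, y)$, and for any $(a, b) \in V \times W$, saturation forces $E(a) \subseteq V \subseteq U$, hence $\Eh(a) \subseteq \overline{U}$; since $b \in W$ is disjoint from $\overline{U}$, we get $b \notin \Eh(a)$ and so $(a, b) \notin \Eh$. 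Therefore $\Eh$ is closed.

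The main obstacle is the symmetry step. The hypothesis of $L$-stability is phrased one-sidedly around $\Eh(x)$, and one has to bridge from ``$y$ is in the closure of $E(x)$'' to its reverse. The trick is that an $E$-saturated open neighborhood of $\Eh(y)$ is large enough to absorb any $E$-class it meets, so once the approximating net $y_\alpha \to y$ enters this neighborhood, the entire class $E(x) = E(y_\alpha)$ is pulled inside, dragging $x$ with it. The closedness step is then essentially a product-space reformulation of the same idea, with normality providing the room to thicken $\Eh(x)$ inside an open set whose closure avoids $y$.
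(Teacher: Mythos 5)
Your proof is correct. The first half (symmetry under $T_1$) is essentially the paper's own argument in contrapositive form: the paper fixes $x \in \Eh(y)$, notes that every saturated neighborhood of $\Eh(x)$ must absorb the whole class $E(y)$ once it meets it, and uses the $T_1$ property (the intersection of all neighborhoods of the closed set $\Eh(x)$ is $\Eh(x)$ itself) to conclude $y \in \Eh(x)$; your choice of the single neighborhood $X \setminus \{x\}$ of $\Eh(y)$ is the same mechanism run backwards, and both proofs then finish with Lemma \ref{lem:009}. The second half is where you genuinely diverge: the paper shows closedness of $\Eh$ indirectly, by verifying the saturation criterion $\Ut = \bigcup \{x \in U \mid \Eh(x) \subseteq U\}$ open so that the quotient map $q: X \to X/\Eh$ is closed (Theorem 3.10 of Kelley), then using compactness of the fibers to get a perfect map, Hausdorffness of $X/\Eh$ (Dugundji XI.5.2), and finally Lemma \ref{lem:0031} (which needs $T_3$) to pull closedness of $\Eh$ back from the quotient. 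You instead prove closedness directly in $X \times X$: given $(x,y) \notin \Eh$, normality separates $\Eh(x)$ from $y$, $L$-stability shrinks the separating open set to a saturated one $V$, and then $V \times W$ misses $\Eh$ because saturation traps each class $E(a)$, $a \in V$, inside $\overline{U}$. This is more elementary (no quotient-space machinery or external theorems) and in fact slightly more general, since your argument only uses that $X$ is $T_4$ (points closed plus normality) and never uses compactness of $X$ or of the classes $\Eh(x)$, whereas the paper's route leans on compact fibers to get perfectness of $q$. What the paper's longer route buys is the intermediate conclusions themselves — closedness of $q$ and Hausdorffness of $X/\Eh$ — which are reused elsewhere (e.g.\ via Lemma \ref{lem:003} and Proposition \ref{proposition:001}), but as a proof of this lemma alone your argument is complete and self-contained.
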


\begin{proof}
Fix $x \in \Eh(y)$.  
Then every neighborhood of $x$ meets $E(y)$. 
Since $E$ is $L$-stable, 
every neighborhood of $\Eh (x)$ contains $E(y)$. 
Because $X$ is $T_1$,  
the intersection of the neighborhoods of a closed set
is exactly the closed set itself. Hence, $y \in E(y) \subseteq \Eh(x)$.
This shows that $\Eh$ is symmetric. 
By Lemma \ref{lem:009},  
we have that 
$\Eh$ is an equivalence relation. 
%
Suppose that 
$X$ is $T_4$. 
To apply Theorem 3.10 \cite{Ke} for $\Eh$, 
we show that 
$\Ut := \cup \{ x \in U \mid \Eh (x) \subseteq U \}$  
is open for any open subset $U$ of $X$. 
Fix $ x \in \Ut$. 
By the normality,  
there is a closed neighborhood $V$ of $\Eh (x)$ 
contained in $U$.
Since $E$ is $L$-stable, 
there is an $E$-saturated neighborhood $A$ of $\Eh (x)$ 
contained in $V$. 
Since $V$ is closed, 
we have 
$\Eh (A) \subseteq V \subseteq U$ and so 
$\Eh (A) \subseteq \Ut$. 
Since $\Eh (A)$ is a $\Eh$-saturated \nbd of $x$, 
we have that 
$\Ut$ is open. 
By Theorem 3.10 \cite{Ke}, 
the quotient map $q: X \to X/\Eh$ is closed. 
Since each fiber $\Eh (x)$ for an element $x$ of $X$ is 
compact, 
we have that 
$q$ is perfect.  
The fact Theorem XI. 5.2 (p.235) \cite{Du} implies that  
%
$X/\Eh$ is Hausdorff. 
By Lemma \ref{lem:0031}, 
we have that  $\Eh$ is closed. 
\end{proof}

We state that 
the equivalence between 
the ($R$-)closedness and 
the $L$-stability. 
 
\begin{proposition}\label{proposition:001}
Suppose that 
$E$ is an equivalence relation on a compact Hausdorff space $X$. 
Then $\Eh$ is a closed equivalence relation 
if and only if 
$E$ is $L$-stable. 
\end{proposition}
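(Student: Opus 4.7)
The plan is to prove the two implications separately. The backward direction is already essentially built into the paper: under the hypothesis that $E$ is $L$-stable on a compact Hausdorff (hence $T_4$) space $X$, Lemma \ref{lemma:001} gives exactly that $\Eh$ is a closed equivalence relation. So I would simply cite Lemma \ref{lemma:001} and move on.

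For the forward direction, suppose $\Eh$ is a closed equivalence relation on the compact Hausdorff space $X$. Fix $x\in X$ and an open neighborhood $U$ of $\Eh(x)$; I want to produce an $E$-saturated open neighborhood $V$ of $\Eh(x)$ with $V\subseteq U$. The idea is to work downstairs in $X/\Eh$. Since $\Eh$ is closed and $X$ is compact Hausdorff, Lemma \ref{lem:0084} tells us that the quotient map $q\colon X\to X/\Eh$ is perfect, in particular closed. The complement $K := X\setminus U$ is closed in $X$, so $q(K)$ is closed in $X/\Eh$. Because $\Eh(x)\subseteq U$, the whole $\Eh$-class of $x$ avoids $K$, hence $q(x)\notin q(K)$. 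Therefore $W := (X/\Eh)\setminus q(K)$ is an open neighborhood of $q(x)$. Setting $V := q^{-1}(W)$ gives an open, $\Eh$-saturated neighborhood of $\Eh(x)$ contained in $U$, and since $E\subseteq\Eh$, any $\Eh$-saturated set is automatically $E$-saturated. This gives the desired $V$.

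The only subtle point, and the place where one has to be careful, is confirming that the set produced by pulling back along $q$ really is $E$-saturated and contains all of $\Eh(x)$ (not just $x$); both follow because $V = q^{-1}(W)$ is a union of $\Eh$-classes and $q(x)\in W$. The main engine is Lemma \ref{lem:0084}, which converts the closedness of $\Eh$ into closedness of $q$; without compactness this step would be the real obstacle, but here it is handed to us. Aside from that, the argument is a routine normal-space-style saturation construction.
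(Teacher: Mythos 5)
Your proposal is correct and follows essentially the same route as the paper: the backward direction is delegated to Lemma \ref{lemma:001}, and the forward direction uses Lemma \ref{lem:0084} to get closedness of the quotient map $q\colon X\to X/\Eh$, then takes the complement of $q^{-1}(q(X\setminus U))$ as the saturated neighborhood, which is exactly the set $X-S$ in the paper's argument.
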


\begin{proof}
By the previous lemma, 
it suffices to show that 
if $\Eh$ is a closed equivalence relation,  
then $E$ is $L$-stable. 
By Lemma \ref{lem:0084}, 
we have that 
the quotient map $q: X \to X/\Eh$ is closed. 
%
For any open neighborhood  $U$ of $\Eh (x)$ for each $x \in X$, 
we have 
$X -U$ is closed and so 
$q(X -U)$ is closed.  
Then 
$S := q^{-1}(q(X -U)) = \Eh (X - U)$ is closed such that 
$S \cap \Eh (x) = \emptyset$, 
because $q(x) \notin q(X - U)$. 
Then $X - S$ is open ${\Eh}$-saturated 
and so 
$E$-saturated such that 
$\Eh (x) \subseteq X - S \subseteq U$
\end{proof}

This implies the following result.  

 \begin{corollary}\label{cor:00}
Suppose that 
$\mathcal{F}$ is a decomposition on a compact Hausdorff space. 
Then 
$\mathcal{F}$ is $R$-closed  
if and only if 
$\mathcal{F}$ is $L$-stable. 
\end{corollary}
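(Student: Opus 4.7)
The plan is to derive this corollary as a straightforward translation of Proposition \ref{proposition:001} via the identification between decompositions and equivalence relations, which was set up in the preliminaries. So there is almost nothing new to prove, and the main task is simply to verify that the hypotheses of the proposition match.

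First I would unpack what the statement means on the equivalence-relation side. By the identification convention, the decomposition $\mathcal{F}$ corresponds to the equivalence relation $E_{\mathcal{F}}$, and $\mathcal{F}$ is called $R$-closed (respectively $L$-stable) precisely when $E_{\mathcal{F}}$ is. Thus the claim reduces to: $E_{\mathcal{F}}$ is $R$-closed on the compact Hausdorff space $X$ if and only if $E_{\mathcal{F}}$ is $L$-stable.

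Next I would note that $E_{\mathcal{F}}$ is automatically an equivalence relation, since it arises from a decomposition. So by definition $R$-closedness of $E_{\mathcal{F}}$ means that $\widehat{E_{\mathcal{F}}}$ is closed; moreover by Corollary \ref{cor:15}, closedness of $\widehat{E_{\mathcal{F}}}$ already forces $\widehat{E_{\mathcal{F}}}$ to be an equivalence relation. In other words, the condition ``$\widehat{E_{\mathcal{F}}}$ is a closed equivalence relation'' which appears in Proposition \ref{proposition:001} is exactly the condition of $R$-closedness for $\mathcal{F}$ in the compact Hausdorff setting.

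Finally I would invoke Proposition \ref{proposition:001} directly: on a compact Hausdorff space, for the equivalence relation $E_{\mathcal{F}}$, the condition that $\widehat{E_{\mathcal{F}}}$ is a closed equivalence relation is equivalent to $E_{\mathcal{F}}$ being $L$-stable. Translating back through the identification gives the desired equivalence for $\mathcal{F}$. Since the work has already been done in Proposition \ref{proposition:001}, there is no genuine obstacle here; the only thing to be careful about is to cite Corollary \ref{cor:15} to ensure that the equivalence-relation hypothesis on $\widehat{E_{\mathcal{F}}}$ is actually encoded by $R$-closedness, so that the two sides of the proposition line up precisely with the two sides of the corollary.
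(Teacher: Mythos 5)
Your proposal is correct and follows the same route as the paper, which states this corollary as an immediate consequence of Proposition \ref{proposition:001} under the identification of decompositions with equivalence relations. Your extra care in citing Corollary \ref{cor:15} to see that ``$\Eh_{\F}$ closed'' already means ``closed equivalence relation'' is exactly the small alignment step needed, so nothing is missing.
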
 

We summarize the properties. 

 \begin{corollary}\label{cor:00}
Let $E$ be an equivalence relation on 
a compact Hausdorff space.  
The following are equivalent: 
\\
1) $E$ is $R$-closed (i.e. $\Eh$ is closed). 
\\
2) $E$ is $D$-stable. 
\\
3) $E$ is $L$-stable. 
\end{corollary}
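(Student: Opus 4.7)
The plan is simply to assemble the two results that precede the corollary, since the statement is a summary rather than a new assertion. The two substantive equivalences $(1)\Leftrightarrow(2)$ and $(1)\Leftrightarrow(3)$ have already been established in essentially the form required; only a small bookkeeping step remains.

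First I would handle $(1)\Leftrightarrow(2)$. This is immediate from the well-known fact recalled just before the statement of Corollary~2.6, namely that for an equivalence relation $E$ on a Hausdorff space, $D = \Eh$ if and only if $\Eh$ is closed. Since every compact Hausdorff space is Hausdorff, this equivalence applies directly. (Equivalently, one may just quote Corollary~2.6 itself once it is observed that the space being compact Hausdorff is a stronger hypothesis.)

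Next I would handle $(1)\Leftrightarrow(3)$. This is precisely the content of Proposition~3.3: under the standing hypothesis that $X$ is compact Hausdorff and $E$ is an equivalence relation, $\Eh$ is a closed equivalence relation if and only if $E$ is $L$-stable. Unwinding the definition, ``$\Eh$ closed'' is exactly ``$E$ is $R$-closed,'' so this is the required biconditional.

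Finally, concatenating the two equivalences yields the chain $(2)\Leftrightarrow(1)\Leftrightarrow(3)$, which proves the corollary. There is no genuine obstacle here; the only mild subtlety worth noting is that $\Eh$ being an equivalence relation is already packaged into both prior results (Corollary~2.6 needs it only in the form that $D$-stability forces $\Eh = D$, and Proposition~3.3 delivers it as part of the $L$-stability side), so no separate verification of transitivity or symmetry of $\Eh$ is needed at this stage. The proof therefore reduces to citing Corollary~2.6 for $(1)\Leftrightarrow(2)$ and Proposition~3.3 for $(1)\Leftrightarrow(3)$.
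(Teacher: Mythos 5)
Your proposal is correct and matches the paper, which offers no separate argument for this corollary and simply assembles the preceding results exactly as you do: the well-known fact (recorded in the earlier corollary) that $D$-stability is equivalent to $\Eh$ being closed on a Hausdorff space gives $(1)\Leftrightarrow(2)$, and Proposition \ref{proposition:001} gives $(1)\Leftrightarrow(3)$. The only point worth making explicit is that to feed condition 1) into Proposition \ref{proposition:001} one passes from ``$\Eh$ closed'' to ``$\Eh$ a closed equivalence relation'' via Corollary \ref{cor:15}, which is available since $E$ is assumed to be an equivalence relation.
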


\section{On the density of $E$}

Consider the case with dense elements. 
%
%
%

\begin{proposition}\label{prop:006}
Suppose that 
an equivalence relation $E$ of $X$ 
has a dense element.  
The following are equivalent: 
\\
1) 
$\Eh$ is symmetric. 
\\
2)
$\Eh$ is an equivalence relation. 
\\
3) 
$\Eh$ is a closed equivalence relation.
\\
4) 
$E$ is minimal. 
\end{proposition}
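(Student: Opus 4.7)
The plan is to establish the cycle $(4) \Rightarrow (3) \Rightarrow (2) \Rightarrow (1) \Rightarrow (4)$. The two middle implications are purely definitional: a closed equivalence relation is in particular an equivalence relation, and an equivalence relation is in particular symmetric. Hence only $(4) \Rightarrow (3)$ and $(1) \Rightarrow (4)$ require any real work.

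For $(4) \Rightarrow (3)$, I would just observe that if $E$ is minimal then $\overline{E(x)} = X$ for every $x \in X$, so $\Eh(x) = X$ for all $x$, whence $\Eh = X \times X$. This is trivially closed and an equivalence relation, which simultaneously supplies $(1)$, $(2)$, and $(3)$.

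The substantive step is $(1) \Rightarrow (4)$, and it is here that the hypothesis of a dense element enters. Assume $\Eh$ is symmetric. Since $E$ is an equivalence relation, Lemma \ref{lem:009} immediately promotes $\Eh$ to a full equivalence relation (so in particular transitive). Fix $x_0 \in X$ with $\overline{E(x_0)} = X$, so that $\Eh(x_0) = X$. For an arbitrary $y \in X$, we have $y \in \Eh(x_0)$, hence by symmetry $x_0 \in \Eh(y)$; transitivity of $\Eh$ then yields $X = \Eh(x_0) \subseteq \Eh(y)$, so $\overline{E(y)} = X$. Thus every orbit is dense and $E$ is minimal. No serious obstacle is anticipated: the whole argument reduces to applying Lemma \ref{lem:009} to convert bare symmetry of $\Eh$ into a full equivalence relation, after which a single dense orbit propagates density to every orbit by transitivity.
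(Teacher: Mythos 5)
Your proof is correct and follows essentially the same route as the paper: both hinge on Lemma \ref{lem:009} to upgrade symmetry of $\Eh$ to a full equivalence relation, then use the dense class to force every $\Eh(y)$ to be all of $X$ (the paper phrases this via the partition into closures, you via an explicit symmetry-plus-transitivity chain), while minimality trivially gives $\Eh = X \times X$.
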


\begin{proof}
Trivially 3) $\Rightarrow$  2). 
By Lemma \ref{lem:009},
2) $\Longleftrightarrow$ 1). 
We show that 
2) $\Rightarrow$ 4). 
If $\Eh$ is an equivalence relation, 
then 
$X$ is decomposed into the closures of elements of $\mathcal{F}_E$. 
Since there is a dense element of $E$, 
we have $\F_{\Eh} = \{X \}$ and so 
$E$ is minimal. 
Finally, 
we show that 
4) $\Rightarrow$ 3). 
If $E$ is minimal, 
then the closure of 
each element of $\F_E$ is the entire $X$ and so  
$\Eh  = X \times X$ is a closed equivalence relation trivially. 
\end{proof}

Note that 
there is a recurrent flow with a dense orbit on 
a compact metrizable space 
such that 
$\Eh_{\F}$ is not symmetric 
but transitive,   
where $\F$ is the set of orbits of a flow (e.g. p.764 \cite{Go}).
We say that 
an element $x$ of $X$ is almost periodic if 
$\Eh (y) = \Eh (x)$ for any $y \in \Eh (x)$.  
Considering the closure of an orbit of each $x \in X$ as the whole topological space, 
we obtain the following statement. 

\begin{corollary}\label{cor:001}
Let 
$x$ be an element of $X$. 
The following are equivalent: 
\\
1) 
$x$ is almost periodic. 
\\
2)
the restriction $\Eh|_{\Eh(x)}$ to 
${\Eh(x)}$ is symmetric 
\\
3)
$\Eh|_{\Eh(x)}$ is a closed equivalence relation. 
\end{corollary}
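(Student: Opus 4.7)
The plan is to reduce the corollary to Proposition \ref{prop:006} by restricting attention to the orbit closure $Y := \Eh(x)$, treated as an ambient space in its own right with the subspace topology.

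First I would restrict $E$ to $Y$ by setting $F := E \cap (Y \times Y)$, which is immediately an equivalence relation on $Y$. Since $\Eh$ is transitive in the orbit-and-foliation setting at hand (cf.\ the remark following Lemma \ref{lem:0085}), any $y \in Y = \Eh(x)$ satisfies $E(y) \subseteq \Eh(x) = Y$, so $F(y) = E(y)$, and moreover $\Eh(y) \subseteq Y$. Because $Y$ is closed in $X$, closures in $Y$ and in $X$ agree for subsets of $Y$, yielding
\[
\widehat{F}(y) \;=\; \overline{F(y)}^{\,Y} \;=\; \overline{E(y)} \;=\; \Eh(y) \;=\; \Eh|_{\Eh(x)}(y)
\]
for every $y \in Y$. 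Thus the hat operation for $F$ on $Y$ coincides with the restriction $\Eh|_{\Eh(x)}$.

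Next I would observe that $x$ is a dense element for $F$ on $Y$, since $\overline{F(x)}^{\,Y} = \overline{E(x)} = Y$. Proposition \ref{prop:006} then applies to $F$ on $Y$, yielding the equivalence of (i) $\widehat{F}$ symmetric, (ii) $\widehat{F}$ a closed equivalence relation on $Y$, and (iii) $F$ minimal. Under the identification above, (i) and (ii) are precisely conditions (2) and (3) of the corollary, while minimality of $F$ on $Y$ asserts that every $F$-orbit is dense in $Y$, i.e.\ $\Eh(y) = Y = \Eh(x)$ for every $y \in Y$, which is exactly the almost periodicity of $x$, namely condition (1).

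The main obstacle is the identification $\widehat{F} = \Eh|_{\Eh(x)}$, which depends on the invariance $E(y) \subseteq \Eh(x)$ for all $y \in \Eh(x)$. This invariance is the transitivity of $\Eh$ recorded in Lemma \ref{lem:0085} --- an automatic feature in the orbit and foliation contexts framing the paper, but in a fully general equivalence-relation setting it would have to be assumed or verified separately for the reduction to succeed.
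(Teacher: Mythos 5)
Your proposal is correct and is essentially the paper's own argument: the paper justifies Corollary \ref{cor:001} by precisely this reduction (``considering the closure of an orbit of each $x \in X$ as the whole topological space'') and an appeal to Proposition \ref{prop:006}. The invariance caveat you flag ($E(y) \subseteq \Eh(x)$ for all $y \in \Eh(x)$, i.e.\ transitivity of $\Eh$) is left implicit in the paper; it is automatic in the flow/foliation setting the paper has in mind, and your identification $\widehat{F} = \Eh|_{\Eh(x)}$ genuinely requires it, so making it explicit is a point in your favor rather than a gap.
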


\section{Codimension one foliations}

In this section, we consider $\mathcal{F}$ 
as a codimension one foliation. 
Let $M$ be a compact connected manifold and 
$\mathcal{F}$ a continuous codimension one foliation on $M$ 
tangent or transverse to the boundaries. 
Recall that 
$\mathcal{F}$ is said to be  compact if 
each leaf of $\mathcal{F}$ is compact. 
Note that 
every codimension one compact foliation on a compact manifold 
has no infinite holonomy.

\begin{lemma}\label{lem:010}
If $\mathcal{F}$ is  pointwise almost periodic, 
then $\mathcal{F}$ is minimal or compact. 
\end{lemma}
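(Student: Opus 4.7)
The plan is to extract from pointwise almost periodicity the statement that every leaf closure is a minimal set of $\mathcal{F}$, and then invoke the classical classification of minimal sets of a codimension one foliation on a compact connected manifold to conclude.

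The first step is the abstract consequence. Because $\mathcal{F}$ is pointwise almost periodic, $\Eh_{\mathcal{F}}$ is an equivalence relation. Lemma \ref{lem:0085}, applied to $\Eh_{\mathcal{F}}$ together with its symmetry, then gives that for every leaf $L$ the closure $\overline{L}$ is a union of whole leaves and, moreover, $\overline{L_y}=\overline{L}$ for every $y\in\overline{L}$. Consequently every leaf closure is itself a minimal set of $\mathcal{F}$, and $M$ is partitioned into these minimal sets.

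Next I would distinguish two cases. If some leaf $L$ satisfies $\overline{L}=M$, then by the equivalence relation property every leaf closure equals $M$, so $\mathcal{F}$ is minimal and we are done. Otherwise every minimal set is a proper subset of $M$. By the classical structure theory of codimension one foliations on a compact connected manifold, such a proper minimal set is either a single compact leaf or an exceptional minimal set with locally Cantor transverse structure. If every minimal set is a compact leaf, then every leaf is compact and $\mathcal{F}$ is compact.

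It therefore remains to rule out the possibility of an exceptional minimal set $K$. Assuming for contradiction that such a $K$ exists, I would invoke Sacksteder's theorem (or, in the continuous setting, the Dippolito completion of a connected component of $M\setminus K$) to produce a leaf $L^{*}\subset K$ whose holonomy along some loop is a strict contraction toward a point $p\in L^{*}\cap T$ on a small transversal $T$. Since $K\cap T$ is a nowhere dense Cantor set in $T$, points $t\in T\setminus K$ can be chosen arbitrarily close to $p$; iterating the contraction on $t$ produces a sequence on the leaf $L_{t}$ converging to $p$, so $p\in\overline{L_{t}}$. Then $\overline{L_{t}}$ is a minimal set of $\mathcal{F}$ that meets the minimal set $K$; since two minimal sets that meet must coincide, this forces $\overline{L_{t}}=K$, hence $L_{t}\subset K$, contradicting $t\in L_{t}\cap(M\setminus K)$.

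The main obstacle is precisely this last step: extracting, from an abstract exceptional minimal set $K$, a leaf of $M\setminus K$ whose closure touches $K$. Morally this is forced by the local Cantor-times-plaque picture of $K$, but making it rigorous requires genuine input from codimension one foliation theory --- Sacksteder's theorem in the smooth case or the Dippolito completion in the continuous case. The rest of the proof is formal translation of the equivalence relation structure of $\Eh_{\mathcal{F}}$ into foliation language.
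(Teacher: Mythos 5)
Your reduction to ``every leaf closure is a minimal set, and these minimal sets partition $M$'' is correct and matches the paper's starting point (it is the content of pointwise almost periodicity via Lemma~\ref{lem:0085} and Lemma~\ref{lem:009}). The divergence, and the gap, is in how you exclude exceptional minimal sets. You rest that step on Sacksteder's theorem, which requires $C^2$ regularity, whereas the foliations in this paper are only assumed continuous; you flag this yourself and offer ``the Dippolito completion in the continuous case'' as a substitute, but you never carry that argument out, and it is not a routine replacement: in low regularity an exceptional minimal set need not contain any leaf with contracting holonomy (Denjoy-type phenomena are exactly the obstruction), so there is no point $p$ and transversal contraction to iterate, and your mechanism for showing that an outside leaf $L_t$ accumulates on $K$ collapses. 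As written, the proof is only complete for $C^2$ foliations.

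The paper closes this hole by a different, regularity-free route that you may want to adopt. Having established that $\mathcal{F}$ consists only of compact leaves and exceptional leaves, it first shows a compact leaf must exist: if $M$ were a union of exceptional minimal sets, Salhi's finiteness theorem \cite{S} would make $M$ a finite union of nowhere dense sets, which is absurd for a manifold. It then shows the (nonempty) union $C$ of compact leaves is open, because pointwise almost periodicity forces every compact leaf to have finite holonomy (a leaf with infinite holonomy is accumulated by leaves whose closures strictly contain it, breaking the partition), and finally that $C$ is closed, via Theorem~4.1.1 of \cite{HH}, which prevents $\partial C$ from containing an exceptional minimal set. Connectedness of $M$ then gives $C=M$. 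Your Sacksteder argument is a legitimate and arguably more self-contained alternative in the smooth category, but to prove the lemma as stated you need the Salhi/Hector--Hirsch clopen argument (or an honest continuous-case substitute for the resilient leaf), not just a pointer to it.
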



\begin{proof}
Recall each minimal set is either 
a closed leaf, an exceptional minimal set, or 
the whole manifold. 
Since $\mathcal{F}$ is pointwise almost periodic, 
any compact leaf of $\mathcal{F}$ has no infinite holonomy. 
Suppose 
$\F$ is not minimal. 
By the almost periodicity, 
each proper leaf is compact 
and 
there are no locally dense leaves.  
Then $\mathcal{F}$ consists of 
compact leaves and 
exceptional leaves. 
We show that 
there is a compact leaf $L$ of $\mathcal{F}$. 
Otherwise, 
$M$ is the union of exceptional leaves. 
By Theorem \cite{S}, 
$M$ consists of finitely many exceptional minimal sets.  
Since each exceptional minimal set is nowhere dense, 
we have that $M$ is nowhere dense. 
This contradicts to that $M$ is a manifold.  
Then 
the union $C$ of compact leaves is nonempty.  
Since any compact leaf of $\mathcal{F}$ has no infinite holonomy, 
we have that  
$C$ is open. 
By Theorem 4.1.1(p.94)\cite{HH}, 
we have that $\partial C$ contains no exceptional minimal sets 
and so
$C$ is clopen.  
Hence $M$ consists of compact leaves. 
\end{proof}

\begin{theorem}\label{th:010}
The following are equivalent: 
\\
1) 
$\mathcal{F}$ is  pointwise almost periodic. 
\\
2)
$\mathcal{F}$ is $R$-closed. 
\\
3) 
$\mathcal{F}$ is minimal or compact. 
\end{theorem}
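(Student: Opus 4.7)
The plan is to establish the cycle $(2) \Rightarrow (1) \Rightarrow (3) \Rightarrow (2)$, since each arrow can be reduced to a result already in hand.

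First, $(2) \Rightarrow (1)$ is immediate from Corollary \ref{cor:16}, which asserts that every $R$-closed decomposition is pointwise almost periodic. The implication $(1) \Rightarrow (3)$ is precisely the content of Lemma \ref{lem:010}, whose proof uses the classical trichotomy for minimal sets (closed leaf, exceptional, or the whole manifold), Sacksteder's theorem to exclude a union of only exceptional minimal sets, and the Haefliger--Hector-type structural result (Theorem 4.1.1 of \cite{HH}) to show that the union of compact leaves is clopen.

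The substantive step is therefore $(3) \Rightarrow (2)$, which I would split into two cases. If $\mathcal{F}$ is minimal, then Lemma \ref{lem:011} yields directly that $\mathcal{F}$ is $R$-closed. If instead $\mathcal{F}$ is compact, I invoke the fact recorded at the start of this section, namely that a codimension one compact foliation on a compact manifold has no infinite holonomy. Since $M$ is a compact manifold and hence compact metrizable, Proposition \ref{cor:01} then applies and produces that $\mathcal{F}$ is $R$-closed.

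The main conceptual obstacle has essentially been cleared before reaching this theorem: the real foliation-theoretic input sits in Lemma \ref{lem:010} (ruling out locally dense leaves and showing the compact--exceptional dichotomy is in fact purely compact) and in Proposition \ref{cor:01} (where Epstein's theorem delivers Hausdorffness of the leaf space for compact foliations with only finite holonomy). Given these, the present statement is mostly an assembly step; the only care needed is to check that the hypotheses of Proposition \ref{cor:01} are met in the compact case, which is guaranteed by the codimension one hypothesis combined with compactness of $M$.
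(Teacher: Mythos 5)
Your proposal is correct and follows essentially the same route as the paper: Corollary \ref{cor:16} for $2)\Rightarrow 1)$, Lemma \ref{lem:010} for $1)\Rightarrow 3)$, and the absence of infinite holonomy for compact codimension one foliations combined with Proposition \ref{cor:01} for $3)\Rightarrow 2)$. Your explicit case split (minimal via Lemma \ref{lem:011}, compact via Proposition \ref{cor:01}) merely unfolds what Proposition \ref{cor:01} already covers, so there is no substantive difference.
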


\begin{proof}
Since $M$ is compact Hausdorff, 
Corollary \ref{cor:16} implies that 
$2) \Rightarrow 1)$. 
By Lemma \ref{lem:010}, 
we obtain 
$1) \Rightarrow 3)$. 
Since any compact codimension one foliation 
$\F$ has no infinite holonomy, 
by Proposition \ref{cor:01}, we have 
$3) \Rightarrow 2)$. 
\end{proof}

\begin{corollary}\label{cor:010}
Suppose that  
$\mathcal{F}$ is not minimal. 
The following are equivalent: 
\\
1) 
$\mathcal{F}$ is  pointwise almost periodic. 
\\
2)
$\mathcal{F}$ is compact. 
\\
3) 
$M/\mathcal{F}$ is either closed interval or a circle. 
\\
4) 
$\mathcal{F}$ is $R$-closed. 
\end{corollary}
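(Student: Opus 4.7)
\textbf{Proof plan for Corollary \ref{cor:010}.}

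The plan is to leverage Theorem \ref{th:010} for the bulk of the equivalences and only do genuine work for the leaf-space statement (3). Under the standing hypothesis that $\F$ is not minimal, the disjunction ``minimal or compact'' appearing in Theorem \ref{th:010} collapses to just ``compact''. Hence Theorem \ref{th:010} immediately gives $(1) \Leftrightarrow (2) \Leftrightarrow (4)$ of the corollary. So the only new content is the equivalence of these three conditions with $(3)$: $M/\F$ is homeomorphic to $[0,1]$ or $S^{1}$.

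I would prove $(3) \Rightarrow (2)$ first since it is the soft direction. Assume $M/\F$ is either $[0,1]$ or $S^{1}$; in particular $M/\F$ is Hausdorff. Then each leaf is a fiber of the quotient map, hence closed in the compact space $M$, hence compact. Therefore $\F$ is a compact foliation, which is $(2)$.

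For $(2) \Rightarrow (3)$, assume $\F$ is compact. Since the paper already notes (in the statement preceding Lemma \ref{lem:010}) that every compact codimension one foliation on a compact manifold has no infinite holonomy, Epstein's theorem (Theorem 4.2 of \cite{Ep}) applies and yields that $M/\F$ is Hausdorff. Since $\F$ is codimension one and every leaf has finite holonomy, generalized Reeb stability furnishes, for each leaf, a saturated tubular neighborhood that is a finite quotient of $L\times (-\varepsilon,\varepsilon)$ or $L\times[0,\varepsilon)$ (the latter for leaves on the tangential boundary of $M$). Passing to the quotient, this gives charts on $M/\F$ modeled on an interval or a half-interval, so $M/\F$ acquires the structure of a topological $1$-manifold, possibly with boundary. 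Combined with compactness (quotient of the compact $M$) and connectedness (quotient of the connected $M$), the classification of compact connected $1$-manifolds with boundary forces $M/\F$ to be either $[0,1]$ or $S^{1}$, giving $(3)$.

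The only real obstacle is the implication $(2) \Rightarrow (3)$: one must be sure that the transverse direction of $\F$ produces genuine $1$-manifold charts on $M/\F$ near every leaf, including leaves on $\partial M$ if the foliation is tangent to the boundary. This is where the ``no infinite holonomy'' hypothesis (together with generalized Reeb stability) does the essential work, since without it the local model would not descend to a Hausdorff $1$-manifold chart. Once the charts are in place, the topological classification of compact connected $1$-manifolds closes the argument.
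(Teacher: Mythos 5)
Your proposal is correct, and its skeleton matches the paper's: both derive $1)\Leftrightarrow 2)\Leftrightarrow 4)$ immediately from Theorem \ref{th:010} (the disjunction collapsing since $\F$ is not minimal), and both treat the leaf-space condition 3) as the only substantive point. The differences are in how that point is handled. For the soft direction you prove $3)\Rightarrow 2)$ directly (fibers over a Hausdorff quotient are closed, hence compact), whereas the paper proves $3)\Rightarrow 4)$ via Lemma \ref{lem:011}; either closes the cycle. For the hard direction $2)\Rightarrow 3)$, the paper first doubles $M$ so that it may assume $M$ closed and $\F$ transversally orientable; then compactness plus transversal orientability in codimension one kills the holonomy entirely, every leaf gets a product neighborhood, and $M/\F$ of the double is a closed $1$-manifold, i.e.\ a circle (the interval case for the original $M$ being recovered from the involution). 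You instead stay on $M$, use the finite-holonomy remark plus Epstein's theorem for Hausdorffness, and invoke generalized Reeb stability to produce saturated neighborhoods $L\times(-\varepsilon,\varepsilon)$ modulo a finite group, whose quotients give interval or half-interval charts; the classification of compact connected $1$-manifolds with boundary then yields $[0,1]$ or $S^1$. Your route is more direct (no doubling, no orientability reduction) at the cost of invoking the stronger Reeb stability statement and of analyzing the order-two holonomy quotient by hand; you should also say a word about leaves meeting a boundary component transversally (the transverse model is still an interval, or one can double as the paper does), which you currently only cover in the tangential case. Both arguments are at the same level of rigor regarding the $C^0$ category, since the paper's product-neighborhood step is itself a Reeb-stability statement.
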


\begin{proof}
By Theorem \ref{th:010}, 
we obtain 
1) $\Longleftrightarrow$ 2) $\Longleftrightarrow$ 4).  
Taking the doubling of $M$, 
we may assume that 
$M$ is closed and 
$\mathcal{F}$ is transversally orientable. 
Lemma \ref{lem:011} implies 
that 
3) $\Rightarrow$ 4).
Thus it suffices to show that 
2) $\Rightarrow$ 3). 
Suppose that 
2) (and 4)) holds. 
By Lemma \ref{lem:003}, 
$M/\mathcal{F}$ is Hausdorff.  
Since $\mathcal{F}$ is compact, 
codimension one,  and transversally orientable, 
we have that $\F$ is without holonomy and so 
each leaf of $\mathcal{F}$ has a product neighborhood  of it. 
Hence
$M/\mathcal{F}$ is a closed $1$-manifold and so a circle.  
\end{proof}

\section{Flows on compact surfaces}

Let $X$ be a compact topological space and 
$G$ a topological group. 
Recall that a subset $S$ of $G$
is is said to be (left) syndetic if there is a compact set $K$ of $G$ with $KS = G$. 
Consider a flow $(X, G)$ (i.e. a continuous (left) action of $G$ on $X$). 
For a point $x \in  X$ 
and an open $U$ of $X$,  
let $N(x, U) = \{ g \in G \mid  gx \in  U\}$.  
We say that 
$x$ is an almost periodic point if 
$N(x, U)$ is syndetic for every neighborhood $U$ of $x$. 
A flow $G$ is pointwise almost periodic if 
every point $x \in X$ is almost periodic. 
Note that a flow on a compact Hausdorff space 
is pointwise almost periodic  if and only if 
the set of orbits is pointwise almost periodic.
A point $x \in X$ is recurrent (or Poisson stable) 
if 
$x \in \alpha(x) \cap \omega(x)$, 
where $\alpha(x)$ (resp. $\omega(x)$) is an alpha (resp. omega) limit set of $x$. 
A flow $G$ is (pointwise) recurrent 
if 
every point  of $X$ is recurrent. 
Note that a pointwise almost periodic flow 
on a compact Hausdorff space $X$ 
is pointwise recurrent 
and that 
a pointwise almost periodic flow on it is 
equivalent to a flow whose orbit closures form a decomposition of it.
$G$ is $R$-closed if 
$R := \{ (x, y) \mid y \in \overline{G(x)} \}$ is closed. 
Notice that 
an $R$-closed flow on a compact Hausdorff space $X$ 
is pointwise almost periodic.

Let $M$ be a compact  connected surface and 
$w$ a continuous vector field of  $M$. 
Note that 
if $w$ is pointwise recurrent, 
then 
$w$ is tangent to the boundary $\partial M$. 
In this section, 
we consider $\mathcal{F}$ as the set of the orbits of the vector field $w$. 
Write 
$M/w := M/\mathcal{F}$ and 
$M/\overline{w} := M/\widetilde{\F}$.  
Now $R = \Eh_{\F} = \{ (x, y) \mid y \in \overline{O_w(x)} \}$. 
%
%

\begin{lemma}\label{lem:00aa}
If $w$ is 
not minimal but 
pointwise recurrent,  
then $M = \mathrm{Sing}(w) \sqcup \mathrm{Per}(w)$.  
\end{lemma}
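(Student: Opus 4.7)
I would argue by contradiction. Suppose some $x \in M$ is neither singular nor periodic; by pointwise recurrence $x \in \omega(x)$, so $O(x)$ is a non-trivial recurrent orbit. The plan is to extract a quasi-minimal set $Q := \overline{O(x)}$ with non-empty interior, show its topological boundary $\partial Q$ consists entirely of singular points and periodic orbits, and then rule out this configuration via a local analysis near $\partial Q$ that exploits pointwise recurrence in $M \setminus Q$.

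For the first step, I would invoke the classical transversal/flow-box construction of Maier for non-trivial recurrent orbits on surfaces. Since $x$ is non-singular, pick a local arc-transversal $\Sigma$ through $x$; since $x \in \omega(x)$ and $x$ is non-periodic, $O(x)$ meets $\Sigma$ at a sequence of pairwise distinct points $x_n \to x$. The flow saturation over a uniform time interval of a short subarc of $\Sigma$ between $x_n$ and $x_{n+1}$ then produces an open two-dimensional tube contained in $Q$, so $U := \mathrm{int}(Q) \neq \emptyset$. Standard quasi-minimal-set theory further tells us every non-singular, non-periodic orbit in $Q$ has closure equal to $Q$, and such orbits densely fill $U$.

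Next, the set $\partial Q = Q \setminus U$ is closed, $w$-invariant (since $Q$ and $M \setminus Q$ are both invariant), and nowhere dense in $M$. If some $y \in \partial Q$ were non-singular and non-periodic, then applying the first step to $y$ would yield $\overline{O(y)} \subset \partial Q$ with non-empty interior, contradicting the nowhere-density of $\partial Q$. Hence $\partial Q \subset \mathrm{Sing}(w) \cup \mathrm{Per}(w)$. Now suppose $\partial Q$ contains a periodic orbit $\gamma$; take a transversal $T$ to $\gamma$ at $p := \gamma \cap T$ and consider the first-return homeomorphism $f$ defined near $p$ with $f(p) = p$. On the $U$-side of $T$, the returns of $O(x)$ accumulate at $p$ in an infinite sequence of pairwise distinct iterates, and by planar monodromy around $\gamma$ the induced dynamics on the $M \setminus Q$-side of $T$ are forced to be asymptotic to $\gamma$ in at least one time direction, so orbits there cannot be recurrent --- contradicting pointwise recurrence. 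An analogous planar return-map argument disposes of any isolated singular $p \in \partial Q$ (using that nearby non-singular orbits in $M \setminus Q$ would have to be simultaneously recurrent and asymptotic to $p$, which is impossible in a two-dimensional flow-box picture around $p$). Thus no such $x$ can exist, and $M = \mathrm{Sing}(w) \sqcup \mathrm{Per}(w)$.

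The principal obstacle is the last local analysis near $\partial Q$: one must rigorously rule out, for a merely continuous vector field, that a quasi-minimal set $Q \subsetneq M$ can have its boundary formed of singular points or periodic orbits while the complementary open set $M \setminus Q$ also supports only recurrent dynamics. This is where the two-dimensionality of $M$, the topological first-return (Poincaré) map structure near a periodic orbit, and the absence of smooth linearization (forcing purely topological arguments) must be combined carefully.
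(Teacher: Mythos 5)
Your overall strategy (rule out a nontrivially recurrent orbit) aims at the right target, but the proof as written has genuine gaps, and it is much heavier than what the paper does. The paper's proof is essentially a citation: pointwise recurrence forces $w$ to be non-wandering and saddle-free, one doubles $M$ to dispose of the boundary, and then Corollary 3.5 of Athanassopoulos \cite{A} (on the flow near nontrivial minimal sets on $2$-manifolds, which always exhibits Denjoy-type wandering behaviour unless the flow is minimal) excludes nontrivial recurrent orbits, leaving $M=\mathrm{Sing}(w)\sqcup\mathrm{Per}(w)$.

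The first gap is in your opening step: you assert that $Q=\overline{O(x)}$ has nonempty interior because the flow-saturated tube over the subarc of $\Sigma$ between $x_n$ and $x_{n+1}$ lies in $Q$. That is false in general: the tube lies in $Q$ only if $O(x)\cap\Sigma$ is dense in that subarc, i.e.\ only in the locally dense case. For an exceptional (Denjoy-type) recurrent orbit, $O(x)\cap\Sigma$ sits inside a Cantor set and $Q$ is nowhere dense, so $U=\mathrm{int}(Q)=\emptyset$ and your entire boundary analysis has nothing to work with. The exceptional case is exactly the hard case here (it is the one the paper kills by quoting the Denjoy-flow/wandering-point result), and your argument never addresses it. The second gap is that the concluding local analysis near $\partial Q$ is not actually carried out, and the specific claims you do make are unjustified: for a periodic orbit $\gamma\subset\partial Q$ you assert that orbits on the $M\setminus Q$ side of the transversal are ``forced to be asymptotic to $\gamma$,'' hence non-recurrent, but a priori that side could consist entirely of periodic orbits (which are recurrent), so no contradiction arises there. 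The genuine contradiction must come from the dynamics on the $Q$-side --- the return map near a nontrivial minimal or quasi-minimal set forces wandering or non-recurrent orbits --- which is precisely the content of the result you would need to reprove. You also leave untreated the cases of non-isolated singularities in $\partial Q$ and of $\partial M\neq\emptyset$ (the paper handles the latter by doubling $M$). As it stands the proposal is an outline whose decisive steps are missing or incorrect.
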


\begin{proof}
Since $w$ is recurrent, 
we have that 
$w$ preserves the boundary of $M$, 
has no saddles, 
and 
is non-wandering
(i.e. 
$x \in J^+(x)$ for any $x \in M$ where 
$J^+(x) = \{ y \in M \mid t_n x_n \to y \text{ for some } x_n \to x 
\text{ and } t_n \to + \infty \}$
). 
By taking the double of $M$ if necessary, 
we may assume that 
$M$ is closed. 
Since the Denjoy flow has wandering points, 
by Corollary 3.5 \cite{A}, 
we have $M = \mathrm{Sing}(w) \sqcup \mathrm{Per}(w)$.  
\end{proof}

This implies the following corollary.

\begin{corollary}\label{thm:00b} 
Let $w$ be a continuous vector field  of a compact connected surface $M$.  
The following are equivalent:
\\
1) 
$w$ is pointwise recurrent. 
\\
2) 
$w$ is pointwise almost periodic. 
\\
3)
$w$ is either minimal or pointwise periodic. 
\end{corollary}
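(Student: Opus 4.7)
The plan is to run the cycle $2) \Rightarrow 1) \Rightarrow 3) \Rightarrow 2)$, with all the real dynamical content isolated in Lemma \ref{lem:00aa}. The implication $2) \Rightarrow 1)$ is immediate from the observation already recorded earlier in this section: on a compact Hausdorff space, any almost periodic point is recurrent, because the syndeticity of the return set $N(x,U)$ to every neighborhood $U$ of $x$ forces $x \in \alpha(x) \cap \omega(x)$.

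For $1) \Rightarrow 3)$, I would split on whether $w$ is minimal. If $w$ is minimal, we are done. Otherwise, Lemma \ref{lem:00aa} applies directly and yields $M = \mathrm{Sing}(w) \sqcup \mathrm{Per}(w)$, which is exactly the statement that $w$ is pointwise periodic: each point is either a singular point (a fixed point of the flow) or lies on a compact periodic orbit.

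For $3) \Rightarrow 2)$, I would handle the two cases separately. If $w$ is minimal, each orbit closure equals the entire surface $M$, so the orbit closures form the trivial (single-class) decomposition $\{M\}$, giving pointwise almost periodicity. If $w$ is pointwise periodic, each orbit is compact and hence already closed, so orbit closures coincide with orbits and partition $M$; by the characterization recorded in the preliminaries of this section (on a compact Hausdorff space, a flow is pointwise almost periodic if and only if its orbit closures form a decomposition), $w$ is pointwise almost periodic.

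The only substantive step is $1) \Rightarrow 3)$, and the heavy lifting there is already done in Lemma \ref{lem:00aa}, which uses Corollary 3.5 of \cite{A} to rule out Denjoy-type wandering on surfaces for non-wandering, saddle-free, boundary-preserving vector fields. Once that lemma is granted, the corollary is a purely formal unpacking and presents no further obstacle.
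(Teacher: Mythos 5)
Your proposal is correct and follows essentially the same route the paper intends: the paper derives this corollary directly from Lemma \ref{lem:00aa} (giving $1) \Rightarrow 3)$ in the non-minimal case), with $2) \Rightarrow 1)$ and $3) \Rightarrow 2)$ supplied by the facts recorded at the start of the section (pointwise almost periodic flows on compact Hausdorff spaces are pointwise recurrent, and pointwise almost periodicity is equivalent to the orbit closures forming a decomposition). Your formal unpacking of the cycle matches that intended argument.
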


Recall that 
a vector field is said to be nontrivial if 
it is neither identical nor minimal.

\begin{corollary}
Suppose that 
$w$ is nontrivial.  
Then 
$w$ is pointwise recurrent  
if and only if  
$w$ is pointwise periodic. 
\end{corollary}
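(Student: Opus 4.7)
The plan is to extract this corollary directly from Corollary \ref{thm:00b}, which already does the substantive work. For the forward implication, I would suppose $w$ is pointwise recurrent and apply Corollary \ref{thm:00b} to conclude that $w$ is either minimal or pointwise periodic. The nontriviality hypothesis rules out the minimal case, leaving pointwise periodicity. The reverse implication is essentially tautological: if every orbit of $w$ is a fixed point or a periodic orbit, then every $x \in M$ lies in both $\alpha(x)$ and $\omega(x)$, so $w$ is pointwise recurrent. No appeal to the surface structure, to Lemma \ref{lem:00aa}, or to the Anosov--Katok-type dichotomy is needed on this direction.

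The only point that requires a sentence of care is the role of the word \emph{nontrivial}. It is defined to exclude both the identical and the minimal vector fields. Of these, only the exclusion of the minimal case is actually used to collapse the dichotomy from Corollary \ref{thm:00b}; excluding identical vector fields is there purely to make the equivalence meaningful rather than trivially satisfied by the zero vector field. I would spell this out briefly so the reader sees exactly where the hypothesis enters.

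I do not anticipate any real obstacle, since all the surface-specific content (preservation of $\partial M$, absence of saddles, non-wandering, the appeal to the Denjoy obstruction and to Corollary 3.5 of \cite{A}) has already been absorbed into Lemma \ref{lem:00aa} and hence into Corollary \ref{thm:00b}. The proof of the present corollary is therefore a two-line deduction, and I would write it as such rather than recapitulating the structure theorem.
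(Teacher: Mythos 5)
Your proposal is correct and matches the paper's (implicit) argument: the corollary is an immediate consequence of Corollary \ref{thm:00b}, with nontriviality excluding the minimal alternative in the forward direction and the reverse direction being trivial since fixed and periodic points are recurrent. No gap; the paper itself offers no further proof beyond this deduction.
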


Next, we show the openness of $\mathrm{Per}(w)$. 

\begin{lemma}\label{lem:00a}
Suppose that $w$ is pointwise recurrent. 
Then 
$\mathrm{Per}(w)$ is open. 
Moreover, 
if $\mathrm{Per}(w)$ is nonempty, 
then 
each connected component of $\mathrm{Per}(w)$ 
is an annulus or a torus, and 
$\mathrm{Per}(w)/w|_{\mathrm{Per}(w)}$ is a  
$1$-manifold. 
\end{lemma}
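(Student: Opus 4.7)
The plan is to handle the two claims separately. Openness is essentially a one-liner from the preceding lemma, while the structural assertion requires a local flow-box analysis at each periodic orbit together with the classification of connected $1$-manifolds and their circle bundles.

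For openness, continuity of $w$ makes the singular set $\mathrm{Sing}(w)=w^{-1}(0)$ closed in $M$, and Lemma \ref{lem:00aa} identifies $\mathrm{Per}(w)=M\setminus\mathrm{Sing}(w)$; so $\mathrm{Per}(w)$ is open. No further input is needed for this part.

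For the structural statement, I would fix a connected component $C$ of $\mathrm{Per}(w)$, so $w|_C$ is nowhere zero and every orbit of $w|_C$ is a circle. The goal is to show (i) the orbit space $C/w|_C$ is a connected $1$-manifold (possibly with boundary, coming from $\partial M\cap C$), and (ii) the quotient map $C\to C/w|_C$ is a (possibly twisted) $S^1$-bundle. Once (i) and (ii) are established, the classification of connected $1$-manifolds forces $C/w|_C$ to be homeomorphic to an interval or to $S^1$, and the $S^1$-bundle structure then identifies $C$ with a cylinder or a torus in the orientable case, and with a M\"obius band or Klein bottle otherwise; passing to orientation double covers reduces these last two to an annulus and a torus respectively. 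The key local step, which I would carry out before everything else, is this: for each $p\in C$ choose a short arc $\Sigma\subset C$ transverse to $w$ at $p$, which exists by the flow-box theorem at the regular point $p$, and consider the first return map $\phi:\Sigma\to\Sigma$; it is a homeomorphism defined near $p$ with $\phi(p)=p$, and each point of $\Sigma$ near $p$ is periodic under $\phi$ because every orbit of $w|_C$ is a circle. Assuming $\phi$ preserves the orientation of $\Sigma$, I would argue that $\phi=\mathrm{id}$ on a neighborhood of $p$; the saturation of that neighborhood is then an open annular chart in which all orbits are circles of the same period, and $[O(p)]$ acquires an interval neighborhood in $C/w|_C$. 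In the transversally non-orientable case one replaces a tubular neighborhood of $O(p)$ by its $2$-fold orientation cover, runs the same argument there, and descends to a M\"obius-type saturated chart, which again projects to an interval in $C/w|_C$.

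The main obstacle will be this local ``$\phi=\mathrm{id}$'' step: the hypothesis only says that every $q\in\Sigma$ near $p$ is periodic under $\phi$, with potentially unbounded least period. The underlying observation is that a continuous increasing self-homeomorphism of an interval cannot have any periodic point $q$ with $\phi(q)\neq q$, for then the iterates $\phi^n(q)$ would be strictly monotone in $n$ and could not return to $q$; hence the fixed set of $\phi$ is open in $\Sigma$, and its complement decomposes into invariant open intervals on which $\phi$ has no fixed point and therefore no periodic point at all. Since every point of $\Sigma$ is periodic, the complement is empty and $\phi=\mathrm{id}$ near $p$. Once this local flow-box description is in place, the Hausdorffness and the $1$-manifold structure of $C/w|_C$ follow by gluing the interval charts, and the classification of circle bundles over connected $1$-manifolds completes the proof.
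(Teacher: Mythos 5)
Your proposal is correct and takes essentially the same route as the paper: Lemma \ref{lem:00aa} plus a flow-box/first-return analysis showing that each periodic orbit has a saturated product neighborhood, whence $\mathrm{Per}(w)$ is open, the orbit space is a $1$-manifold, and the components are annuli or tori --- the paper merely compresses your return-map step into the single assertion that each periodic orbit has a product neighborhood consisting of periodic orbits. The only slip is that the fixed-point set of the increasing return map is closed rather than open, but your subsequent argument via the complementary invariant intervals (equivalently: for an increasing interval homeomorphism every periodic point is already fixed) is unaffected.
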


\begin{proof}
By Lemma \ref{lem:00aa},
we have $M = \mathrm{Sing}(w) \sqcup \mathrm{Per}(w)$.  
Suppose that there are periodic points. 
By the Flow box theorem, 
each periodic orbit has a product neighborhood  of it 
which consists of periodic orbits. 
Hence 
the set ${\mathrm{Per}(w)}$ of periodic orbits of $w$ is open 
and 
the quotient space of ${\mathrm{Per}(w)}$ is a $1$-manifold.
Then 
each connected component of $\mathrm{Per}(w)$ 
is an annulus or a torus. 
\end{proof}

From now on, 
we assume that 
$M$ is orientable. 
Note that 
we can obtain the similar results for the non-orientable case, 
by taking the double cover of $X$. 
For simplicity, we consider only 
the orientable case. 
Recall that 
$w$ is $R$-closed if 
$R := \{ (x,y) \in M \times M \mid y \in \overline{O(x)} \}$ 
is closed.

\begin{lemma}\label{lemma01}
If $w$ is nontrivial and 
$R$-closed, 
then 
$M/w$ is a closed interval or a circle, 
 and 
each connected component of 
$\partial {\mathrm{Per}(w)} := \overline{{\mathrm{Per}(w)}} - {\mathrm{Per}(w)}$  
is an element of $\mathcal{F}$ which is a center.
\end{lemma}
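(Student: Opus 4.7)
The plan is to use the equivalences already established to reduce to pointwise periodicity with a Hausdorff quotient, then exploit $R$-closedness together with the annular structure of $\mathrm{Per}(w)$ to force each boundary component to be a single center, and finally glue the resulting closed arcs to identify $M/w$.

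I would first invoke Corollary \ref{cor:16} and Corollary \ref{thm:00b} together with nontriviality to get $w$ pointwise periodic, giving $M=\mathrm{Sing}(w)\sqcup\mathrm{Per}(w)$ with $\mathrm{Per}(w)\neq\emptyset$ (Lemma \ref{lem:00aa}); Lemma \ref{lem:00a} makes $\mathrm{Per}(w)$ a disjoint union of open annuli or tori with $\mathrm{Per}(w)/w$ a $1$-manifold. Compactness of every orbit gives $\widetilde{\F}=\F$, so Lemma \ref{lem:003} yields $M/w$ Hausdorff, and Lemma \ref{lem:0084} makes $q\colon M\to M/w$ perfect. A toral component is clopen in $M$, hence equals $M$ by connectedness, yielding $M/w\cong S^1$ with $\partial\mathrm{Per}(w)=\emptyset$ (the center statement is vacuous); otherwise every component of $\mathrm{Per}(w)$ is an annulus.

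The key step is to show each connected component $K\subset\partial\mathrm{Per}(w)$ is a singleton. The direct consequence of $R$-closedness I would use is: if $p_n,q_n\in C_n$ for a sequence of periodic orbits with $p_n\to p\in\mathrm{Sing}(w)$, then since $E_{\F}$ is closed in $M\times M$ and $(p_n,q_n)\in E_{\F}$, any subsequential limit $q$ of $(q_n)$ lies in $O(p)=\{p\}$; in particular the whole orbit $C_n$ Hausdorff-collapses to $\{p\}$ and $\mathrm{diam}(C_n)\to 0$. Applied to orbits of an annulus $A$ on an end accumulating on $K$, each small $C_t$ sits inside a local chart disk around $p\in K$ and bounds a Jordan disk $D_t$ shrinking to $\{p\}$; the nested family $\{D_t\}$ sweeps out a punctured-disk neighborhood of $p$ filled with periodic orbits from $A$, making $p$ isolated in $\mathrm{Sing}(w)$ from the $A$-side, and hence isolated in $K$. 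Connectedness of $K$ then forces $K=\{p\}$, and by construction $p$ is a center. Assembling, each annulus component $A$ projects to a closed arc $\overline{q(A)}\cong[0,1]$ (or $\cong S^1$ when both ends coincide at a single center), each center is endpoint of exactly one such arc (its disk-neighborhood of periodic orbits is connected and lies in a single annulus component), and gluing these arcs at centers without branching produces a compact connected Hausdorff $1$-manifold, hence $[0,1]$ or $S^1$.

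The main obstacle is confirming that the nested disks $D_t$ actually sweep out an honest punctured-disk neighborhood of $p$, equivalently, that no other singular point survives inside $D_t$ for arbitrarily small $t$. While $R$-closedness cleanly gives $\mathrm{diam}(C_t)\to 0$, excluding accumulation of other singular points inside the shrinking disks requires carefully combining Jordan-curve bounding in local charts with the pointwise periodic hypothesis: any alleged singular accumulation would itself have to lie on the boundary of some annulus component crossing into $D_t$, and one must check that this contradicts $D_t\setminus\{p\}$ being contained (for small $t$) in the single component $A$.
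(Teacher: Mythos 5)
Your skeleton coincides with the paper's: the paper's entire proof is ``by Lemma \ref{lem:003}, $M/\overline{w}=M/w$ is Hausdorff; by Lemma \ref{lem:00a}, $M/w$ is a closed interval or a circle and each component of $\partial\mathrm{Per}(w)$ is a center.'' So your reduction to pointwise periodicity via Corollaries \ref{cor:16} and \ref{thm:00b}, the use of Lemma \ref{lem:003}, and the appeal to the annulus/torus structure of $\mathrm{Per}(w)$ are exactly the paper's moves; what you add is the geometric content that the paper compresses into its second citation, namely why each component of $\partial\mathrm{Per}(w)$ is a single center and why the resulting arcs glue to $[0,1]$ or $S^1$. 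Your extraction of $\mathrm{diam}(C_t)\to 0$ from closedness of $R$ (legitimate here, since pointwise periodicity makes every orbit closed, so $\hat{E}=E=R$) is the right extra ingredient, and it is genuinely needed: without it a circle of fixed points bounding an annulus of uniformly large periodic orbits would survive all the softer arguments.

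The one step you assert rather than prove is the nesting of the disks $D_t$ around $p$, and that is exactly where your flagged obstacle lives. Once $\mathrm{diam}(C_t)<\varepsilon$ the curve $C_t$ lies in a chart and bounds a disk $D_t$ with $\mathrm{diam}(D_t)\to 0$, but you must decide on which side of $C_t$ the end of $A$ sits; note also that a priori the end limit set $L$ is only a compact connected set of singular points, not yet the single point $p$ your chart is centered at. The dichotomy that closes this: the sub-annulus $A_t=\bigcup_{s<t}C_s$ is connected and disjoint from $C_t$, hence lies in one complementary component of $C_t$; if it lay in the large component for arbitrarily small $t$, then the other sub-annulus $A^t=\bigcup_{s>t}C_s$ would lie in $D_t$, which is absurd since $A^t$ increases to the fixed open annulus $A$ while $\mathrm{diam}(D_t)\to 0$. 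Hence $A_t\subseteq D_t$ for all small $t$, the $D_t$ nest, and $\bigcap_t\overline{D_t}$ is a single point $p$ equal to $L$. This also dissolves your worry about stray singular points inside $D_t$: one gets $D_t=A_t\cup\{p\}$, so $D_t\cap\mathrm{Sing}(w)=\{p\}$ and $p$ is a center isolated in $\mathrm{Sing}(w)$. With that inserted (together with the observation, which you already make, that a center cannot be approached by two distinct annulus ends), the gluing step and hence the whole argument goes through, and in fact supplies details the paper itself leaves implicit.
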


\begin{proof}
By Lemma \ref{lem:003}, 
we have that 
$M/\overline{w} = M/w$ is Hausdorff. 
By Lemma \ref{lem:00a}, 
we have that 
$M/w$ is a closed interval or a circle 
and  that 
each connected component of  
$\partial {\mathrm{Per}(w)}$ 
is an element of $\mathcal{F}$ which is a center. 
\end{proof}

Now, we state the characterization of $R$-closedness. 

\begin{theorem}\label{th:007} 
Suppose that $w$ is nontrivial. 
The following are equivalent: 
\\
1) 
$w$ is $R$-closed. 
\\
2)
$w$ consists of at most two centers and other periodic orbits. 
\\
3) 
The orbit space $M/w$ 
is either 
a circle reduced from a torus
or 
is a closed interval 
reduced from a closed disk, a closed annulus or a sphere. 

In particular, 
$w$ is pointwise periodic if one of equivalent conditions holds. 
\end{theorem}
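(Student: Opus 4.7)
The plan is to establish the cycle of implications $(3) \Rightarrow (1) \Rightarrow (2) \Rightarrow (3)$ and then derive the final assertion directly from $(2)$.

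First, $(3) \Rightarrow (1)$ is immediate from Lemma~\ref{lem:011}: both a closed interval and a circle are Hausdorff, so $R$-closedness follows without further work. For $(1) \Rightarrow (2)$, Corollary~\ref{cor:16} gives that $R$-closedness implies pointwise almost periodicity; Corollary~\ref{thm:00b} together with nontriviality then forces $w$ to be pointwise periodic, and Lemma~\ref{lem:00aa} yields $M = \mathrm{Sing}(w) \sqcup \mathrm{Per}(w)$. Since each center lies in $\partial \mathrm{Per}(w)$ (being the limit of nearby closed orbits), Lemma~\ref{lemma01} identifies the centers with boundary points of $M/w$ coming from $\partial\mathrm{Per}(w)$. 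As $M/w$ is either a closed interval or a circle, this bounds the number of centers by two.

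The main work is in $(2) \Rightarrow (3)$. Under $(2)$, every point is either a center or periodic, so $w$ is pointwise recurrent and Lemma~\ref{lem:00a} applies: $\mathrm{Per}(w)$ is open with each connected component either an annulus or a torus. If some component $T$ is a torus, then $T$ is both open and compact, hence clopen; connectedness of $M$ forces $M = T$, so there are no centers and $M/w$ is a compact connected $1$-manifold without boundary, i.e.\ a circle. Otherwise every component of $\mathrm{Per}(w)$ is an annulus. The key local observation, which is the crux of the argument, is that near each center $p$ there is a disc neighborhood $D$ with $D \setminus \{p\}$ foliated by periodic orbits; since $D \setminus \{p\}$ is connected, it lies in a single component of $\mathrm{Per}(w)$, so each center belongs to the closure of exactly one annulus component. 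Because no two distinct annulus components share either a center limit point or a boundary circle of $M$, their closures in $M$ are pairwise disjoint; connectedness of $M$ then forces the existence of a unique annulus component $C$ with $M = \overline{C}$. A case analysis on whether each of the two ends of $C$ is a boundary circle of $M$ or a center identifies $M$ as a sphere (two centers), a closed disc (one center), or a closed annulus (no centers), giving $M/w = [0,1]$ in every case.

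The final assertion that $w$ is pointwise periodic is built into $(2)$: every orbit is a center or a closed orbit. The main obstacle is the disjointness-of-closures argument inside $(2) \Rightarrow (3)$, which rests on the local picture around a center — a punctured disc foliated by closed orbits and therefore contained in exactly one annulus component of $\mathrm{Per}(w)$. Once this is in place, the global topology of $M$ is forced by connectedness and a brief enumeration of end types.
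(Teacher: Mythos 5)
Your cycle of implications is sound, but the route differs from the paper's at the key step. The paper never proves $2) \Rightarrow 3)$ directly: it proves $2) \Rightarrow 1)$ by observing (via the flow box theorem) that $\mathrm{Per}(w)$ is open with $1$-manifold quotient and that the finitely many centers are isolated, so that $M/w=M/\overline{w}$ is Hausdorff, and then invokes Lemma \ref{lem:003}; the implications $1)\Rightarrow 2)$ and $1)\Rightarrow 3)$ are both read off from Lemma \ref{lemma01}, exactly as in your $(1)\Rightarrow(2)$, and $3)\Rightarrow 1)$ is the same Hausdorff-quotient argument as your $(3)\Rightarrow(1)$. Your $(2)\Rightarrow(3)$ instead classifies $M$ itself: torus components of $\mathrm{Per}(w)$ are clopen and force $M=T^2$; otherwise the punctured-disc picture at each center shows the closure of each annulus component is clopen (this is the point you should make explicit — pairwise disjointness of the closures alone does not contradict connectedness if there were infinitely many components, but clopenness of $\overline{C}$, which follows from your own local observation at the centers and openness of $C$, does), so $M=\overline{C}$ for a single annulus component, and the end analysis yields sphere, closed disk or closed annulus with $M/w=[0,1]$ (in the torus case one should also note that orbits are two-sided in an orientable closed surface, so the quotient circle has no boundary). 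What each approach buys: the paper's argument is shorter, since Hausdorffness of $M/w$ plus Lemma \ref{lem:003} bypasses any discussion of the topology of $M$; your argument is longer but actually substantiates the ``reduced from a closed disk, a closed annulus or a sphere'' clause of statement 3), which the paper's proof obtains only implicitly through Lemma \ref{lemma01}, and it avoids having to argue directly that isolated centers do not obstruct Hausdorffness of the quotient.
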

 
\begin{proof}
By Lemma \ref{lemma01}, 
we have that 
1) $\Rightarrow$ 2) 
and that  
1) $\Rightarrow$ 3). 
Suppose that 
3) holds. 
Since $M/w$ is Hausdorff, 
we obtain that 
$M/w = M/\overline{w}$ is Hausdorff. 
By Lemma \ref{lem:003}, 
we have that $w$ is $R$-closed. 
Suppose that 
2) holds. 
By the Flow box theorem, 
we have that 
$\mathrm{Per}(w)$ is open and 
$\mathrm{Per}(w)/w|_{\mathrm{Per}(w)} 
= \mathrm{Per}(w)/\overline{w}|_{\mathrm{Per}(w)}$ is a $1$-manifold. 
Since singularities are finite and so isolated, 
$M/w = M/\overline{w}$ is Hausdorff. 
By Lemma \ref{lem:003}, 
we have that $w$ is $R$-closed. 
\end{proof}

Recall that 
$R$-closedness and 
$D$-stability for vector fields
are equivalent. 
Therefore Theorem 4.3, 4.4 \cite{APS} are 
related to our characterization.  
Consider the regular case. 
Recall that 
we say that 
a vector field $w$ is regular if 
$w$ is topologically equivalent to 
a vector field whose singular points are non-degenerate.  
In the two dimensional case, 
each singularity of 
a regular vector field is either 
a sink, a source, 
a (topological) saddle, or a center.

\begin{lemma}\label{lem:004} 
Suppose $w$ is nontrivial. 
Then 
$R$ is closed 
if and only if  
$w$ is 
pointwise almost periodic and 
topologically equivalent to 
a regular vector field. 
\end{lemma}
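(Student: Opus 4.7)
\emph{Proof sketch.} The plan is to reduce both implications to Theorem~\ref{th:007} and Corollary~\ref{thm:00b}, using that a topological equivalence $h\colon M \to M'$ of flows induces the product homeomorphism $h \times h$ that carries the orbit-closure relation of one flow onto that of the other, so $R$-closedness is a topological invariant under flow equivalence.

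For the forward direction, assume $R$ is closed. Corollary~\ref{cor:16} gives that $w$ is pointwise almost periodic, and since $w$ is nontrivial, Theorem~\ref{th:007} tells us that $\mathrm{Sing}(w)$ consists of at most two centers with every other orbit periodic, and that $M$ is one of $S^2$, $D^2$, $S^1 \times [0,1]$, or $T^2$. I would then produce an explicit topological equivalence with a standard regular model (axial rotation on $S^2$, rotation on $D^2$, translation on the annulus, or a constant field on $T^2$) by exploiting the Hausdorff $1$-manifold quotient $M/w$ (Lemma~\ref{lem:003}): on the periodic region the Flow Box theorem trivializes $\mathrm{Per}(w)$ as a circle bundle over $M/w$, matching the corresponding trivialization of the model, and near each isolated center the nested collapsing periodic orbits of $w$ are identified with the local normal form of the model center and spliced onto the global trivialization.

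For the converse, assume $w$ is pointwise almost periodic and topologically equivalent to a regular field $w'$. By Corollary~\ref{thm:00b} and nontriviality, $w$, and therefore $w'$, is pointwise periodic. Regularity makes the singularities of $w'$ non-degenerate and isolated, so each is a sink, source, saddle, or center; the first three types admit arbitrarily close non-periodic orbits, contradicting pointwise periodicity, so every singularity of $w'$ is a center, and there are only finitely many. A Poincar\'e--Hopf count (with flow tangent to $\partial M$, automatic for pointwise recurrent fields) forces the number of centers to equal $\chi(M)$, hence at most two, and restricts $M$ to $S^2$, $D^2$, $S^1 \times [0,1]$, or $T^2$. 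So $w'$ satisfies condition 2) of Theorem~\ref{th:007}, giving $R$-closedness of $w'$, which transfers back to $w$ via the equivalence.

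The hard part is the forward direction, specifically the continuity of the splice across each center. On the periodic region the match is routine once one has a Hausdorff $1$-manifold quotient and a circle-bundle trivialization, but near a center one must arrange that the nested collapsing periodic orbits of $w$ admit a combinatorial parametrization by a neighborhood of the corresponding endpoint of $M/w$ that agrees with the smooth model's parametrization, so that the two orbit-preserving homeomorphisms glue into a single global one. The key leverage here is again that $R$-closedness promotes $M/w$ to a genuine closed interval or circle, so the endpoints corresponding to centers are bona fide boundary/interior points of a $1$-manifold and carry consistent nested neighborhood bases on both sides.
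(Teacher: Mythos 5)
Your proposal is correct and follows essentially the same route as the paper: both directions are reduced to Theorem~\ref{th:007}, and the converse uses Corollary~\ref{thm:00b} plus the Poincar\'e--Hopf index count to rule out sinks, sources and saddles and bound the number of centers by two, exactly as in the paper (your extra step of transferring $R$-closedness through the equivalence $h\times h$ is a harmless elaboration of the paper's implicit use of the definition of ``regular''). The only real divergence is in the forward direction: the paper simply reads the conclusion off Theorem~\ref{th:007} --- once $w$ consists of at most two (topological) centers and otherwise periodic orbits, its singularities are of non-degenerate type, so topological equivalence to a regular field is taken as immediate --- whereas you set out to build an explicit global equivalence with a standard model (rotation on $S^2$, $D^2$, the annulus, or $T^2$) via the Hausdorff quotient, the flow-box trivialization of $\mathrm{Per}(w)$, and a splice at each center. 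That construction is stronger than what the lemma needs and is the one place you leave only a sketch (the continuity of the splice across a center); it would be a genuine piece of work to complete, but it is not demanded by the paper's argument, which avoids it entirely by leaning on Theorem~\ref{th:007}.
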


\begin{proof}
Suppose that 
$R$ is closed. 
By Theorem \ref{th:007}, 
we have that 
$w$ is pointwise periodic 
and 
is 
topologically equivalent to 
a regular vector field. 
%
%
%
%
Conversely, 
suppose that 
$w$ is pointwise almost periodic and regular. 
By Corollary \ref{thm:00b}, 
$w$ is pointwise periodic. 
Since $w$ has no saddle points, 
by the Poincar\'e-Hopf index formula, 
we obtain that 
$w$ consists of periodic orbits and at most two centers. 
By Theorem \ref{th:007},  we obtain 
$w$ is $R$-closed. 
\end{proof}

Now we state 
the following characterization 
in the regular case.

\begin{proposition}\label{prop:0055}
Let $w$ be a continuous regular nontrivial vector field of $M$. 
The following are equivalent: 
\\
1) 
$w$ is pointwise recurrent. 
\\
2)
$w$ is pointwise almost periodic. 
\\
3) 
$w$ is $R$-closed. 
\\
4)
$w$ is pointwise periodic. 
\\
5) $\mathrm{Sing}(w)$ consists of centers 
and $w$ has neither exceptional minimal sets nor limit cycles. 
\\
6) 
the orbit space $M/w$ is either 
a circle reduced from a torus, or 
a closed interval reduced from 
either 
a closed annulus, 
a closed disk, or a sphere. 
\end{proposition}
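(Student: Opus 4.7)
The plan is to chain the proposition's six conditions through the results already established in the paper, handling condition 5) separately. The equivalences split into three short cycles which I would dispatch in turn.

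First I would establish $1) \Leftrightarrow 2) \Leftrightarrow 4)$. Corollary \ref{thm:00b} supplies $1) \Leftrightarrow 2)$ together with the trichotomy that a pointwise recurrent $w$ is either minimal or pointwise periodic; since $w$ is nontrivial, the minimal case is excluded, giving $2) \Rightarrow 4)$. The converse $4) \Rightarrow 1)$ is immediate because every periodic point is recurrent.

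Next I would handle $2) \Leftrightarrow 3) \Leftrightarrow 6)$. Since $w$ is already regular and nontrivial, Lemma \ref{lem:004} gives $2) \Leftrightarrow 3)$ directly, and Theorem \ref{th:007} provides $3) \Leftrightarrow 6)$, which translates $R$-closedness into the stated description of the orbit space $M/w$.

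Finally I would tie 5) into the cycle. For $3) \Rightarrow 5)$, Theorem \ref{th:007} already tells us that $w$ consists of at most two centers and every other orbit periodic, so $\mathrm{Sing}(w)$ consists of centers only, and there are neither limit cycles nor exceptional minimal sets. For the reverse direction $5) \Rightarrow 4)$, I would argue that $w$ is non-wandering and then invoke Corollary 3.5 of \cite{A}, as in the proof of Lemma \ref{lem:00aa}, to conclude $M = \mathrm{Sing}(w) \sqcup \mathrm{Per}(w)$. Non-wandering will follow because regularity restricts singularities to sinks, sources, topological saddles, and centers; condition 5) leaves only centers, and a wandering orbit on the compact surface would have to accumulate on one of the forbidden structures (a limit cycle, an exceptional minimal set, a sink, a source, or a saddle separatrix).

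The hard part will be making the step $5) \Rightarrow 4)$ fully rigorous: one must verify that the three exclusions in 5), together with regularity, really preclude every form of non-recurrence on a compact surface. The cleanest route is the non-wandering reduction above followed by the citation of \cite{A}; after the orientable double-cover reduction already used in the paper, this argument applies without change.
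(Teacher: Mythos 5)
Your overall skeleton agrees with the paper's: the paper also gets $1)\Leftrightarrow 2)\Leftrightarrow 3)\Leftrightarrow 4)$ from Corollary \ref{thm:00b} together with Lemma \ref{lem:004} (using nontriviality to discard the minimal case), and $3)\Leftrightarrow 6)$ from Theorem \ref{th:007}; your $3)\Rightarrow 5)$ via Theorem \ref{th:007}(2) is the same easy observation the paper makes in the form $4)\Rightarrow 5)$. The only point of real divergence is the implication $5)\Rightarrow 4)$, which is also the only hard step, and there your plan has a genuine gap.

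You propose to first show $w$ is non-wandering and then invoke Corollary 3.5 of \cite{A} ``as in Lemma \ref{lem:00aa}.'' But Lemma \ref{lem:00aa} and its appeal to \cite{A} are made under the hypothesis of \emph{pointwise recurrence}, not mere non-wandering, and pointwise recurrence is not available at this stage: for nontrivial $w$ it is equivalent (via Corollary \ref{thm:00b}) to the very conclusion $4)$ you are trying to establish, so the citation risks circularity. Moreover non-wandering by itself does not imply that every orbit is closed (an irrational flow on the torus is non-wandering with no closed orbit), so even granting your non-wandering step you would still have to exclude locally dense and nontrivially recurrent orbits; your sketch only lists structures on which a \emph{wandering} orbit would accumulate, which at best rules out wandering points. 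The substantive content that must be written out is the limit-set analysis itself: under $5)$ and regularity, the $\omega$-limit set of any non-closed orbit contains a minimal set, which can only be a center (impossible to approach, since a center is surrounded by invariant annuli of periodic orbits), a periodic orbit (which would then be a limit cycle, excluded), or an exceptional minimal set (excluded). Once this is written down it yields $4)$ for every orbit directly, making the detour through non-wandering and \cite{A} superfluous; this direct route is essentially what the paper does, first excluding exceptional (local) minimal sets because their closures would produce a periodic orbit, hence a limit cycle, and then showing every orbit is closed because otherwise the boundary of the set of non-closed orbits would contain minimal sets which, under $5)$, are closed orbits and hence limit cycles.
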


\begin{proof}
By Corollary \ref{thm:00b}  
and 
Lemma \ref{lem:004},  
1) $\Leftrightarrow$ 2) 
$\Leftrightarrow$ 3) 
$\Leftrightarrow$ 4). 
By Theorem \ref{th:007}, 
3) $\Leftrightarrow$ 6).  
Suppose that 
4) holds. 
Since $w$ is regular,  
$\mathrm{Sing}(w)$ consists of centers and 
so 5) holds. 
Finally, we show that 
5) $\Rightarrow$ 4). 
Suppose that 5) holds. 
We show that 
there are no exceptional orbits. 
Otherwise there is a local exceptional minimal set $K$.  
Then there is a periodic orbit which is contained in the closure of $K$, 
because $\mathrm{Sing}(w)$ consists of centers and 
$w$ has no exceptional minimal sets. 
So this means that 
there is a limit cycle, which contradicts. 
We show that 
each orbit is closed. 
Otherwise  
the boundary of the set of 
non-closed orbits contains minimal sets. 
Since there are no exceptional orbits and 
since 
$\mathrm{Sing}(w)$ consists of centers, 
the minimal sets are closed orbits 
and so 
there is limits cycles, which contradicts. 
Therefore 4) holds.  
\end{proof}

By the existence of the Denjoy flow,   
the condition that 
$w$ has no exceptional minimal sets in 4), is necessary. 
Finally we consider the divergence-free case. 
Recall that 
a $C^1$ vector field $w$ on 
a Riemannian manifold with a volume form $dvol$ is 
divergence-free 
if 
$L_w dvol = (\mathrm{div}\,  w) dvol$, 
where 
$L_w$ is the Lie derivative along $w$.

\begin{corollary}\label{prop:005}
Suppose that $w$ is regular and  divergence-free and has 
singularities. 
The following are equivalent: 
\\
1) 
$w$ is pointwise recurrent. 
\\
2)
$w$ is pointwise almost periodic. 
\\
3) 
$w$ is $R$-closed. 
\\
4)
$w$ is pointwise periodic. 
\\
5) $w$ has neither saddles nor $\partial$-saddles. 
\\
6) 
$w$ 
has neither saddles nor $\partial$-saddles 
and 
is structurally stable in the set of divergence-free vector fields 
up to topological equivalence. 
\\
7) 
$M/w$ is a closed interval. 
\\
If an equivalent condition holds, then 
$M$ is either a closed disk or a sphere and has one or two singularities, which are centers. 
\end{corollary}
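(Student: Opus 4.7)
The plan is to use Proposition~\ref{prop:0055} as the skeleton and add the two extra ingredients supplied by the divergence-free hypothesis (ruling out sinks, sources, limit cycles, and exceptional minimal sets) and the assumption that $\mathrm{Sing}(w)\neq\emptyset$ (which rules out the torus and annulus in the list of possible orbit-space reductions).

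First I would note that, since $w$ has singularities, $w$ is nontrivial (it is neither minimal nor identically zero, the latter because the regular hypothesis forces isolated non-degenerate singularities). Thus Proposition~\ref{prop:0055} applies and gives $1)\Leftrightarrow 2)\Leftrightarrow 3)\Leftrightarrow 4)$ at once. For $4)\Rightarrow 5)$, pointwise periodicity forbids any singularity about which nearby orbits fail to close up, so the regular classification leaves only centers; in particular there are neither saddles nor $\partial$-saddles. For $5)\Rightarrow 4)$ I would combine the divergence-free hypothesis with the list in Proposition~\ref{prop:0055}(5): regularity plus the absence of saddles/$\partial$-saddles reduces the possible singularities to centers, sinks, and sources, and volume-preservation rules out sinks and sources (a sink would contract a small disk, a source would expand one). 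The same volume argument excludes limit cycles (a limit cycle would force monotone spiraling in a transversal, contradicting Poincar\'e recurrence in a tubular flow-box), and an exceptional minimal set on a compact orientable surface is likewise incompatible with area preservation because its complement would split into wandering components of positive area. Hence condition (5) of Proposition~\ref{prop:0055} is satisfied, so $4)$ follows.

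For $4)\Leftrightarrow 7)$, I would apply Proposition~\ref{prop:0055}(6): the orbit space $M/w$ is a circle reduced from a torus or a closed interval reduced from a closed disk, a closed annulus, or a sphere. The torus and annulus cases produce no singularities, so the hypothesis $\mathrm{Sing}(w)\neq\emptyset$ forces the interval case; conversely if $M/w$ is a closed interval, Proposition~\ref{prop:0055} immediately yields pointwise periodicity. For $5)\Leftrightarrow 6)$, the non-trivial direction is structural stability: once the singularities are centers and every orbit is periodic, the local picture near each center is a concentric family of periodic orbits and the global picture is a one-parameter family of closed orbits, so any sufficiently $C^1$-small divergence-free perturbation is topologically equivalent to $w$ (one may argue by straightening the orbit space, which is a one-dimensional manifold with boundary on which the perturbation still induces a trivial foliation). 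The converse implication $6)\Rightarrow 5)$ is part of the statement.

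Finally I would identify $M$ itself. Having shown that $M/w$ is a closed interval and that the only possible closed surfaces appearing in Proposition~\ref{prop:0055}(6) with a non-empty singular set are the disk and the sphere, the Poincar\'e--Hopf formula (all singularities are centers of index $+1$) gives $\#\mathrm{Sing}(w)=\chi(M)$, so the disk yields exactly one center and the sphere exactly two. The main obstacle I anticipate is the exceptional-minimal-set step in $5)\Rightarrow 4)$: whereas the exclusion of limit cycles by area preservation is essentially immediate, ruling out Denjoy-type minimal sets for merely $C^1$ divergence-free fields requires a careful area-accounting on a transversal, and this is the point where I would invest the most technical work.
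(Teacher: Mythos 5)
Your chain $1)\Leftrightarrow 2)\Leftrightarrow 3)\Leftrightarrow 4)$, the trivial implication $4)\Rightarrow 5)$, the identification $4)\Leftrightarrow 7)$ (torus and annulus excluded because $\mathrm{Sing}(w)\neq\emptyset$), and the final Poincar\'e--Hopf count all match the paper's use of Proposition~\ref{prop:0055}. Your route for $5)\Rightarrow 4)$ is genuinely different from the paper's: you verify condition 5) of Proposition~\ref{prop:0055} directly, using volume preservation and Poincar\'e recurrence to exclude sinks, sources, limit cycles and exceptional minimal sets, whereas the paper goes $5)\Rightarrow 6)\Rightarrow 7)$ by quoting Ma--Wang (Theorems 2.1.2 and 1.4.6 of \cite{MW}). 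Your recurrence argument is fine and arguably more elementary than you fear: a limit cycle or a Denjoy-type gap produces a nonempty open set of wandering points, which has positive area and so contradicts Poincar\'e recurrence for an area-preserving flow; no delicate transversal accounting is needed.

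The genuine gap is your treatment of $5)\Rightarrow 6)$. Condition 6) includes structural stability within divergence-free fields, and your sketch --- ``straighten the orbit space, on which the perturbation still induces a trivial foliation'' --- assumes exactly what must be proved, namely that a $C^1$-small divergence-free perturbation again has orbit space an interval (equivalently, has only centers and periodic orbits); nothing in your argument controls the perturbed field's orbit structure. The paper closes this step by first using Poincar\'e--Hopf (no saddles, so all indices are $+1$, hence after doubling $M=S^2$) and then citing Theorem 2.1.2 of \cite{MW}, which gives the structural stability criterion for divergence-free fields; the reverse direction $6)\Rightarrow 7)$ is then obtained from Theorem 1.4.6 of \cite{MW} (a structurally stable divergence-free field is determined by its saddle connection diagram, which is empty here). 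To repair your proof you must either invoke these results as the paper does, or supply a genuine argument, e.g.\ that on $S^2$ a divergence-free field is Hamiltonian, and a $C^1$-small divergence-free perturbation of a field whose Hamiltonian is a Morse function with exactly two critical points (both extrema) is again of this type, hence topologically equivalent. As written, the equivalence of 6) with the other conditions is not established.
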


\begin{proof}
By proposition \ref{prop:0055}, 
we obtain 
1) $\Leftrightarrow$ 
2) $\Leftrightarrow$ 
3)  $\Leftrightarrow$ 
4)  $\Leftrightarrow$ 
7). 
By taking the doubling of $M$ if necessary, 
we may assume  that 
$M$ has no boundaries. 
Obviously, 
4) $\Rightarrow$ 5).  
We show that 
5) $\Rightarrow$ 6).  
Suppose 5) holds. 
Since $w$ is regular and has singularities, 
by Poincar\'e-Hopf theorem,  
we have 
$M = S^2$.  
By Theorem 2.1.2\cite{MW}, 
$w$ is structurally stable in 
the set of divergence-free vector fields. 
Finally, 
we show that 6) $\Rightarrow$ 7). 
Suppose 6) holds. 
By Theorem 1.4.6.\cite{MW}, 
$w$ is determined by 
the saddle connection diagram (the union of saddle connection) 
up to topological equivalence. 
Since $w$ has no saddles,   
we obtain that  
$w$ has two centers and periodic orbits.  
By proposition \ref{prop:0055}, 
we have that 7) holds. 
\end{proof}

\section{Examples}

Recall a well-known fact that  
a point in 
a compact Hausdorff space is 
pointwise almost periodic 
if and only if 
its orbit closure is a minimal set. 
On the other hand, 
this is not true for non-compact cases.  
For instance, 
in Theorem 2.2 \cite{FK}, 
they have constructed 
a homeomorphism $f_{\mathcal{B}}$ on a non-compact 
Hausdorff space which is not $R$-closed 
but pointwise almost periodic as a flow 
such that there is an orbit closure which is not minimal. 
Note that 
a minimal set of an $R$-closed homeomorphism on 
a locally compact Hausdorff space needs 
not be compact (e.g. any non-trivial translation on $\Z$). 
%
Though 
there is 
a minimal mapping on a locally compact $T_1$ space 
which is pointwise almost periodic (see Theorem 3.2 \cite{FK}), 
%
there is no minimal homeomorphism on 
a locally compact non-compact space 
which is pointwise almost periodic as a flow. 
Otherwise 
fix a compact \nbd $C$ of a point $x$ 
and an open \nbd $U \subseteq C$ of $x$. 
The almost periodicity implies that 
$\bigcup_{i = 0}^N f^i(U) \supseteq O_f(x)$ 
for some $N > 0$. 
Since $X - \bigcup_{i = 0}^N f^i(U)$ is closed saturated, 
the minimality implies that 
$X - \bigcup_{i = 0}^N f^i(U) = \emptyset$. 
Therefore $X = \bigcup_{i = 0}^N f^i(C)$ is compact. 
Moreover the well-known fact is not true even for compact cases (see  following two examples). 
%
 
\begin{example}
Let $G$ be a trivial group and 
$X = \{ 0,1 \}$ a two point set with a topology $\{ \emptyset, X, \{1\} \}$. 
Since the closed sets are $ \emptyset, X, \{0\}$, 
the closed sets of the product space are 
$ \emptyset, \{(0, 0)\}, \{0\} \times X , X \times \{0\}, X \times X - \{(1,1)\}, X \times X$. 
Trivially $X$ is compact and 
$G$ is periodic. 
In particular,  
$G$ is pointwise almost periodic as a flow. 
$R = \{ (0,0) , (1,1), (1,0) \} = X \times X - \{(0,1)\}$ is not closed. 
Since $\overline{G 1} = X$, 
the $X/\widetilde{G} = X$ is not Hausdorff.  
Moreover the orbit of $1$ is not minimal but periodic. 
\end{example}

%
A next example shows 
them 
even if $X$ is  a $T_1$-space 

\begin{example}\label{ex:002}
Let $\alpha \in \mathbb{R} - \mathbb{Q}$ be an irrational number,   
$G = \mathbb{Z}$ a discrete group and 
$X = \mathbb{R}/ \mathbb{Z}$ a topological space with a topology 
$ \tau = \{ \emptyset  \} \sqcup \{ U, U \setminus F \mid U \text{ is cofinite in } X \}$, 
where $F := \mathbb{Z} \alpha/ \mathbb{Z} \subset X$. 
$G$ acts by $n \cdot [x] := [\alpha n + x ]$. 
Then 
$X$ is a compact $T_1$-space, 
$G$ is continuous and pointwise almost periodic  as a flow, 
$R$ is not closed, 
and 
$\overline{Gx}$ for any $x \in X - F$ is not minimal. 
\\
Indeed, 
notice that 
the set $C_X$ of closed subsets is  
$ \{ \emptyset, X  \} \sqcup \{ E, E \cup F \mid E \text{ is finite in } X \}$. 
For any $n \in G$, 
since $ n \cdot F = F$, 
we have that 
$ n \cdot C_X = C_X$ and so $G$ acts continuously.  
For any $x \in F$, each neighborhood  of $x$ is cofinite. 
Hence $X$ is compact. 
For any $x \in X$, 
every \nbd of $x$ contains all but finitely many points of its orbit
and so $G$ is pointwise almost periodic  as a flow. 
Since $\tau$ contains the cofinite topology, 
each point of $X$ is closed and so $X$ is $T_1$.  
For any $x \in X - F$, 
since  $\overline{Gx} = X$ and $F$ is a proper closed subset, 
we obtain that 
$\overline{Gx} = M$ is not minimal.  
For any $x, y  \in X - F$, 
we have 
$ Gx \times \{ y \} \subset R$.
Since any neighborhood  of $0$ is cofinite and so meets $Gx$, 
we obtain 
$0 \in \overline{Gx}$ and so $( 0, y ) \in \overline{Gx \times \{y \}} 
\subseteq \overline{R}$.    
Since $y \notin F = \overline{G 0}$,  
we have  
$( 0, y ) \notin R$.   
Thus $R$ is not closed. 
\end{example}
 
Replacing the topology in the above example, 
we obtain two homeomorphisms on non-compact Hausdorff spaces 
such that 
the first homeomorphism constructed in Proposition 1.5 \cite{ML}
is pointwise almost periodic as a flow 
and have a non-minimal orbit closure, 
and 
the second homeomorphism constructed in Proposition 1.7 \cite{ML}
has an orbit closure of an almost periodic point 
which contains a non almost periodic point. 
A next example also shows that Lemma \ref{lem:003} is not true 
 in general.

\begin{example}\label{ex:0021}
Let $\alpha \in \mathbb{R} - \mathbb{Q}$ be an irrational number,   
$X = \mathbb{R}/ \mathbb{Z}$, 
$F = \{ \alpha  n \mid n \in \mathbb{Z} \}/ \mathbb{Z}$, 
$\tau$ the topology on $X$ induced by the Euclidean topology on $\R$,  
and 
$E := 1_X \cup (F \times F)$ an equivalence relation on $X$.  
Equip with $X$ 
a topology generated by $\tau$ and 
$ \{ U \setminus F \mid U \in \tau \}$. 
Put $\F = \F_E$. 
Then 
$X$ is a non-compact  Hausdorff space, 
$\F$ is pointwise almost periodic, 
$R = \Eh$ is closed, 
and 
$X/\Ft$ is non-Hausdorff. 
Indeed, 
since $F$ is closed, 
we have 
$\Eh = E$ and so 
$\F$ is pointwise almost periodic. 
Now we show that $R$ is closed. 
Fix $x \neq y \in X$.  
If $x, y \notin F$, then
there are 
disjoint \nbds $U_x, U_y \in \tau$ of $x$ and $y$ on $X$. 
Then $(U_x \setminus F) \times (U_y \setminus F)$ 
is a \nbd of $(x, y)$ which does not meet $\Eh$. 
If $x \in F$ and $y \notin F$, 
then  
there are 
disjoint \nbds $U_x, U_y \in \tau$ of $x$ and $y$ on $X$. 
Then $U_x \times (U_y \setminus F)$ 
is a \nbd of $(x, y)$ which does not meet $\Eh$. 
This shows that $\Eh$ is closed. 
For $x \in X - F$, 
since $x$ and $F$ can't be separated by disjoint open sets, 
$X/\Ft$ is non-Hausdorff. 
\end{example}

A next example also shows that 
there is a flow on a compact surface such that 
$G$ is pointwise almost periodic 
and 
$R$ is not closed  but symmetric.

\begin{example}\label{ex:04}
Let $G $ be an additive group $\mathbb{R}$ (resp. $\mathbb{Z}$), 
$X = \{ (x, y) \mid x^2 + y^2 \leq 1 \}$ a 
unit closed disk, 
and  
$v = ( y ( 1- ( x^2 + y^2)), -x ( 1- ( x^2 + y^2)))$ a vector field.  
$G$ acts by $t \cdot (x, y) := v_t(x, y)$. 
Then the fix point set of $G$ is the union of the boundary and the origin, 
the other orbits are periodic (resp. almost periodic) orbits, 
and 
the orbit (resp. orbit class) space is not $T_1$. 
Hence  
$G$ is pointwise almost periodic  
and  $R$ is not closed but symmetric. 
\\
Indeed, 
the boundary points are singular but not separated by saturated neighborhoods. 
Let $S_r := \{ (x, y) \mid x^2 + y^2  = r^2 \}$ be a circle. 
Then $G$ acts $S_r$ as rotations. 
Hence $G$ is pointwise almost periodic. 
For $r_n :=1 - 1/ \sqrt{\pi n}$, $\mathbb{Z} \subseteq G$ acts $S_{r_n}$ as irrational rotations. 
Since $(r_n,  -r_n) \in R$ but $(1, -1) \notin R$, 
we have $R$ is a non-closed equivalence relation.  
\end{example}

The following example shows the existence of 
a pointwise periodic $R$-closed homeomorphism on 
a compact metrizable space which 
is not periodic.  

\begin{example}\label{ex:06}
Let 
$Y = S^1 \times 
\{ 1/n \in \mathbb{R} \mid n \in \mathbb{Z}_{>1} \}$ and 
$f: Y \to Y$ a homeomorphism by $f(x, y) := (x + y, y)$. 
Denote by $X$ the one point compatification of $Y$. 
Then canonical extension of $f$ with the new fixed point 
is also a homeomorphism.  
Hence  
$f$ is not periodic but pointwise periodic  
and 
$X$ is a compact metrizable space. 
Since the orbit space is Hausdorff, 
we have that 
$f$ is $R$-closed. 
\end{example}


\end{document}